\newtheorem{lemma}{Lemma}
\newtheorem{theorem}{Theorem}
\newtheorem{corollary}{Corollary}
\newtheorem{proposition}{Proposition}
\theoremstyle{definition}
\newtheorem{definition}{Definition}
\theoremstyle{convention}
\theoremstyle{example}
\theoremstyle{remark}
\newtheorem{remark}{Remark}
\numberwithin{equation}{section}
\def\I{{{\mathbb I}}}
\def\E{{{\mathbb E }}}
\def\L{{{\mathbb L }}}
\def\U{{{\mathbb U}}}
\def\V{{{\mathbb V }}}
\def\CC{{{\mathcal C}}}
\def\EE{{{\mathcal E}}}
\def\FF{{{\mathcal F}}}
\def\LL{{{\mathcal L}}}
\def\OO{{{\mathcal O}}}
\def\EExt{{{\mathcal E}xt}}
\def\FFitt{{{\mathcal F}itt}}
\def\Spec{{{\rm Spec \,}}}
\def\Hilb{{{\rm Hilb \,}}}
\def\Univ{{{\rm Univ \,}}}
\def\Grass{{{\rm Grass }}}
\def\Quot{{{\rm Quot \,}}}
\def\Pic{{{\rm Pic \,}}}
\def\coker{{{\rm coker \,}}}
\def\ker{{{\rm ker \,}}}
\def\rank{{{\rm rank \,}}}
\def\im{{{\rm im \,}}}
\def\id{{{\rm id }}}
\begin{document}
\renewcommand{\refname}{References}
\renewcommand{\proofname}{Proof.}
\thispagestyle{empty}

\title[Isomorphism of compactifications of moduli of
vector bundles]{Isomorphism of compactifications of moduli of
vector bundles: nonreduced moduli}
\author{{N.V.Timofeeva}}%
\address{Nadezda Vladimirovna Timofeeva
\newline\hphantom{iii} Yaroslavl State University,
\newline\hphantom{iii} ul. Sovetskaya, 14,
\newline\hphantom{iii} 150000, Yaroslavl, Russia}%
\email{ntimofeeva@list.ru}%
\vspace{1cm} \maketitle {\small
\begin{quote}
\noindent{\sc Abstract. } A morphism of  the moduli functor  of
admissible semistable pairs  to the  Gieseker -- Maruyama moduli
functor (of semistable coherent torsion-free sheaves)  with the
same Hilbert polynomial on the surface, is constructed. It is
shown that these functors are isomorphic, and main components of
moduli scheme for semistable admissible pairs $((\widetilde S,
\widetilde L), \widetilde E)$ are isomorphic to main components of
the Gieseker -- Maruyama moduli scheme.\medskip

\noindent{\bf Keywords:} moduli space, semistable coherent
sheaves, semistable \linebreak admissible pairs, moduli functor,
vector bundles, algebraic surface.
 \end{quote}

\bigskip

\begin{flushright}
{\it To the blessed memory of my Mum}
\end{flushright}

\section*{Introduction}
In the present article we continue to investigate  the
compactification of moduli of stable vector bundles on a surface
by locally free sheaves. Various aspects of its construction and
basic properties were given in  preceding papers of the author
\cite{Tim0} -- \cite{Tim8}.

 In the present article $S$ is smooth irreducible projective
algebraic surface over an algebraically closed field $k$ of
characteristic 0, $\OO_S$ its structure sheaf, $E$ coherent
torsion-free $\OO_S$-module, $E^{\vee}:={{\mathcal
H}om}_{{\mathcal O}_S}(E, {\mathcal O}_S)$ its dual ${\mathcal
O}_S$-module. $E^{\vee}$ is reflexive and hence locally free.
Everywhere in this article a locally free sheaf and its
corresponding vector bundle are idetified and both terms are used
as synonyms. Let  $L$ be very ample invertible sheaf on $S$; it is
fixed and is used as a polarization. The symbol $\chi(\cdot)$
denotes Euler -- Poincar\'{e} characteristic, $c_i(\cdot)$
 $i$-th Chern class.

\begin{definition} \label{admsch} \cite{Tim3, Tim4} Polarized algebraic scheme
$(\widetilde S, \widetilde L)$ is called {\it admissible} if it
satisfies one of the following conditions

i) $(\widetilde S, \widetilde L) \cong (S,L)$,

ii) $\widetilde S \cong {\rm Proj \,} \bigoplus_{s\ge
0}(I[t]+(t))^s/(t^{s+1})$ where $I={{\mathcal F}itt}^0 {{\mathcal
E}xt}^2(\varkappa, {\mathcal O}_S)$ for Artinian quotient sheaf
 $q_0: \bigoplus^r {\mathcal
O}_S\twoheadrightarrow \varkappa$ of length $l(\varkappa)\le c_2$,
and $\widetilde L = L \otimes (\sigma ^{-1} I \cdot {\mathcal
O}_{\widetilde S})$ is very ample invertible sheaf on the scheme
$\widetilde S$; this polarization  $\widetilde L$ is called  {\it
distinguished polarization}.
\end{definition}

Recall the definition of a sheaf of 0-th Fitting ideals known from
commutative algebra and involved in the previous definition. Let
$X$ be a scheme, $F$  $\OO_X$-module of finite presentation $F_1
\stackrel{\varphi}{\longrightarrow} F_0 \to F$. Without loss of
generality we assume that  $\rank F_1 \ge \rank F_0$.
\begin{definition} {\it The sheaf of 0-th Fitting ideals } of
$\OO_X$-module $F$ is defined as  $\FFitt^0 F =\im
(\bigwedge^{\rank F_0} F_1 \otimes \bigwedge^{\rank F_0}
F_0^{\vee} \stackrel{\varphi'}{\longrightarrow}\OO_X)$, where
$\varphi'$ is a morphism of  $\OO_X$-modules induced by $\varphi$.
\end{definition}

\begin{remark} In further considerations we replace $L$ by its
big enough tensor power, if necessary for $\widetilde L$ to be
very ample. This power can be chosen constant and fixed, as shown
in \cite{Tim4}. All Hilbert polynomials are compute according to
new $L$ and $\widetilde L$ respectively.
\end{remark}

As shown in \cite{Tim3}, if $\widetilde S$ satisfies the condition
(ii) in the definition \ref{admsch}, it is decomposed into the
union of several components $\widetilde S=\bigcup_{i\ge
0}\widetilde S_i$. It has a morphism $\sigma: \widetilde S \to S$
which is induced by the structure of $\OO_S$-algebra on the graded
object $\bigoplus_{s\ge 0}(I[t]+(t))^s/(t^{s+1})$.

\begin{definition} \cite{Tim4} $S$-{\it stable}
(respectively, {\it semistable}) {\it pair} $((\widetilde
S,\widetilde L), \widetilde E)$ is the following data:
\begin{itemize}
\item{$\widetilde S=\bigcup_{i\ge 0} \widetilde S_i$ --
admissible scheme, $\sigma: \widetilde S \to S$ morphism which is
called {\it canonical}, $\sigma_i: \widetilde S_i \to S$ its
restrictions on components $\widetilde S_i$, $i\ge 0;$}
\item{$\widetilde E$ vector bundle on the scheme
$\widetilde S$;}
\item{$\widetilde L \in \Pic \widetilde S$ distinguished polarization;}
\end{itemize}
such that
\begin{itemize}
\item{$\chi (\widetilde E \otimes \widetilde
L^n)=rp(n),$ the polynomial $p(n)$ and the rank $r$ of the sheaf
$\widetilde E$ are fixed;}
\item{the sheaf $\widetilde E$ on the scheme $\widetilde S$ is {\it
stable } (respectively, {\it semistable}) {\it due to Gieseker,}
i.e. for any proper subsheaf $\widetilde F \subset \widetilde E$
for $n\gg 0$
\begin{eqnarray*}
\frac{h^0(\widetilde F\otimes \widetilde L^n)}{{\rm rank\,} F}&<&
\frac{h^0(\widetilde E\otimes \widetilde L^n)}{{\rm rank\,} E},
\\ (\mbox{\rm respectively,} \;\;
\frac{h^0(\widetilde F\otimes \widetilde L^n)}{{\rm rank\,}
F}&\leq& \frac{h^0(\widetilde E\otimes \widetilde L^n)}{{\rm
rank\,} E}\;);
\end{eqnarray*}}
\item{on each of additional components $\widetilde S_i, i>0,$
the sheaf $\widetilde E_i:=\widetilde E|_{\widetilde S_i}$ is {\it
quasi-ideal,} i.e. admits a description of the form
\begin{equation}\label{quasiideal}\widetilde E_i=\sigma_i^{\ast}
\ker q_0/tor\!s_i.\end{equation} for some $q_0\in \bigsqcup_{l\le
c_2} {\rm Quot\,}^l \bigoplus^r {\mathcal O}_S$. }\end{itemize}
\end{definition}
The definition of the subsheaf $tor\!s_i$ will be given below.

Pairs $(( \widetilde S, \widetilde L), \widetilde E)$ such that
 $(\widetilde S, \widetilde
L)\cong (S,L)$ will be called {\it $S$-pairs}.

In the series of articles of the author \cite{Tim0}
--- \cite{Tim4} a projective algebraic scheme
$\widetilde M$ is built up as reduced moduli scheme of
$S$-semistable admissible pairs and in \cite{Tim6} it is
constructed as possibly nonreduced moduli space.

The scheme  $\widetilde M$ contains an open subscheme  $\widetilde
M_0$ which is isomorphic to the subscheme $M_0$ of
Gieseker-semistable vector bundles in the Gieseker -- Maruyama
moduli scheme $\overline M$ of torsion-free semistable sheaves
whose Hilbert polynomial is equal to  $\chi(E \otimes L^n)=rp(n)$.
The following definition of Gieseker-semistability is used.

\begin{definition}\cite{Gies} The coherent  ${\mathcal O}_S$-sheaf $E$ is {\it
stable} (respectively, {\it semistable}) if for any proper
subsheaf  $F\subset E$ of rank $r'={\rm rank\,} F$ for $n\gg 0$
$$
\frac{\chi(E \otimes L^n)}{r}>\frac{\chi(F\otimes L^n)}{r'},\;
{\mbox{\LARGE (}}{\mbox{\rm respectively,}} \; \frac{\chi(E
\otimes L^n)}{r}\ge \frac{\chi(F\otimes L^n)}{r'}{\mbox{\LARGE
)}}.
$$
\end{definition}

Let  $E$ be a semistable locally free sheaf. Then, obviously, the
sheaf  $I={{\mathcal F}itt}^0 {{\mathcal E}xt}^1(E, {\mathcal
O}_S)$ is trivial and $\widetilde S \cong S$. In this case
$((\widetilde S, \widetilde L), \widetilde E) \cong ((S, L),E)$
and we have a bijective correspondence $\widetilde M_0 \cong M_0$.

Let $E$ be a semistable nonlocally free coherent sheaf; then the
scheme  $\widetilde S$ contains reduced irreducible component
$\widetilde S_0$ such that the morphism
$\sigma_0:=\sigma|_{\widetilde S_0}: \widetilde S_0 \to S$ is a
morphism of blowing up of the scheme $S$ in the sheaf of ideals
$I= {{\mathcal F}itt}^0 {{\mathcal E}xt}^1(E, {\mathcal O}_S).$
Formation of a sheaf  $I$ is an approach to the characterization
singularities if the sheaf $E$ i.e. its difference from a locally
free sheaf. Indeed, the quotient sheaf $\varkappa:= E^{\vee \vee}/
E$ is Artinian of length not greater then $c_2(E)$, and
${{\mathcal E}xt}^1(E, {\mathcal O}_S) \cong {{\mathcal
E}xt}^2(\varkappa, {\mathcal O}_S).$ Then ${{\mathcal F}itt}^0
{{\mathcal E}xt}^2(\varkappa, {\mathcal O}_S)$ is a sheaf of
ideals of  (in general case nonreduced) subscheme $Z$ of bounded
length \cite{Tim6} supported at finite set of points on the
surface $S$. As it is shown in \cite{Tim3}, others irreducible
components $\widetilde S_i, i>0$ of the scheme  $\widetilde S$ in
general case carry nonreduced scheme structure.

Each semistable coherent torsion-free sheaf $E$ corresponds to a
pair  $((\widetilde S, \widetilde L), \widetilde E)$ where
$(\widetilde S, \widetilde L)$  defined as described.

Now we describe the construction of the subsheaf $tor\!s$ in
(\ref{quasiideal}). Let $U$ be Zariski-open subset in one of
components  $\widetilde S_i, i\ge 0$, and
$\sigma^{\ast}E|_{\widetilde S_i}(U)$ correspond\-ing group of
sections. This group is ${\mathcal O}_{\widetilde S_i}(U)$-module.
Sections  $s\in \sigma^{\ast}E|_{\widetilde S_i}(U)$ annihilated
by prime ideals of positive codimensions in  ${\mathcal
O}_{\widetilde S_i}(U)$, form a submodule in
$\sigma^{\ast}E|_{\widetilde S_i}(U)$. This submodule is denoted
as  $tor\!s_i(U)$. The correspondence
 $U \mapsto tor\!s_i(U)$ defines a subsheaf $tor\!s_i
\subset \sigma^{\ast}E|_{\widetilde S_i}.$ Note that associated
primes of positive codimensions which annihilate sections $s\in
\sigma^{\ast}E|_{\widetilde S_i}(U)$, correspond to subschemes
supported in the preimage $\sigma^{-1}({\rm Supp\,}
\varkappa)=\bigcup_{i>0}\widetilde S_i.$ Since by the construction
the scheme  $\widetilde S=\bigcup_{i\ge 0}\widetilde S_i$ is
connected \cite{Tim3}, subsheaves  $tor\!s_i, i\ge 0,$ allow to
construct a subsheaf $tor\!s \subset \sigma^{\ast}E$. The former
subsheaf is defined as follows. A section  $s\in
\sigma^{\ast}E|_{\widetilde S_i}(U)$ satisfies the condition $s\in
tor\!s|_{\widetilde S_i}(U)$ if and only if
\begin{itemize}
\item{there exist a section  $y\in {\mathcal O}_{\widetilde S_i}(U)$ such that
 $ys=0$,}
\item{at least one of the following two conditions is satisfied:
either  $y\in {\mathfrak p}$, where $\mathfrak p$ is prime ideal
of positive codimension; or there exist Zariski-open subset
$V\subset \widetilde S$ and a section  $s' \in \sigma^{\ast}E (V)$
such that  $V\supset U$, $s'|_U=s$, and $s'|_{V\cap \widetilde
S_0} \in$\linebreak $tor\!s (\sigma^{\ast}E|_{\widetilde
S_0})(V\cap \widetilde S_0)$. In the former expression the torsion
subsheaf $tor\!s(\sigma^{\ast}E|_{\widetilde S_0})$ is understood
in usual sense.}
\end{itemize}

The role of the subsheaf $tor\!s \subset \sigma^{\ast}E$ in our
construction is analogous to the role of torsion subsheaf in the
case of reduced and irreducible base scheme. Since no confusion
occur, the symbol  $tor\!s$ is understood everywhere in described
sense. The subsheaf $tor\!s$ is called a {\it torsion subsheaf}.

In \cite{Tim4} it is proven that sheaves $\sigma^{\ast} E/tor\!s$
are locally free. The sheaf $\widetilde E$ include in the pair
$((\widetilde S, \widetilde L), \widetilde E)$ is defined by the
formula $\widetilde E = \sigma^{\ast}E/tor\!s$. In this
circumstance there is an isomorphism $H^0(\widetilde S, \widetilde
E \otimes \widetilde L) \cong H^0(S,E\otimes L).$

In the same article it was proven that the restriction of the
sheaf  $\widetilde E$ to each of components $\widetilde S_i$,
$i>0,$ is given by the quasi-ideality relation (\ref{quasiideal})
where $q_0: {\mathcal O}_S^{\oplus r}\twoheadrightarrow \varkappa$
is an epimorphism defined by the exact triple  $0\to E \to E^{\vee
\vee} \to \varkappa \to 0$ in view of local freeness of the sheaf
$E^{\vee \vee}$.

Resolution of singularities of a semistable sheaf $E$ can be
globalized in a flat family by means of the construction developed
in various versions in  \cite{Tim1, Tim2, Tim4}. Let $T$ be a
reduced irreducible quasi-projective scheme, ${\mathbb E}$ a sheaf
of  ${\mathcal O}_{T\times S}$-modules, ${\mathbb L}$ invertible
${\mathcal O}_{T\times S}$-sheaf very ample relatively $T$ and
such that  ${\mathbb L}|_{t\times S}=L$, and $\chi({\mathbb E}
\otimes {\mathbb L}^n|_{t\times S})=rp(n)$ for all closed points
$t\in T$. We also assume that $T$ contains nonempty open subset
$T_0$ such that ${\mathbb E}|_{T_0\times S}$ is locally free
${\mathcal O}_{T_0 \times S}$-module. Then following objects are
defined:
\begin{itemize} \item{$\widetilde T$ integral normal scheme
obtained as a blowing up $\phi: \widetilde T \to T$ of the scheme
$T$,}
\item{$\pi: \widetilde \Sigma \to \widetilde T$ flat family of admissible
schemes with invertible ${\mathcal O}_{\widetilde \Sigma}$-module
$\widetilde {\mathbb L}$ such that  $\widetilde {\mathbb
L}|_{t\times S}$ distinguished polarization of the scheme
$\pi^{-1}(t)$,}
\item{$\widetilde {\mathbb E}$ locally free
${\mathcal O}_{\widetilde \Sigma}$-module and $((\pi^{-1}(t),
\widetilde {\mathbb L}|_{\pi^{-1}(t)}),  \widetilde {\mathbb
E}|_{\pi^{-1}(t)})$ is $S$-semistable admiss\-ible pair.}
\end{itemize}
In this situation there is a blowup morphism $\Phi: \widetilde
\Sigma \to
 \widetilde T \times S$ and \begin{equation}\label{descsh}(\Phi_{\ast} \widetilde {\mathbb E})^{\vee
\vee}=(\phi, id_S)^{\ast}{\mathbb E};\end{equation} what follows
from the coincidence of reflexive sheaves at right hand side and
at left hand side, on the open subset apart the subset of
codimension 3. It is important that the scheme  $\widetilde T
\times S$ is integral and normal.

The described mechanism was called in \cite{Tim4} a {\it standard
resolution}.

In  \cite{Tim8} the procedure of standard resolution is
generalized to the case of families with nonreduced base. It is
shown that transformation of the family of torsion-free coherent
sheaves $E$ can be done in such a way that we get a family of
admissible semistable pairs  $((\pi: \widetilde \Sigma \to T,
\widetilde \L), \widetilde \E)$ with the same base $T$, i.e. base
scheme does not undergo a birational transformation and $\phi$ is
identity isomorphism.

\begin{remark} In \cite{Tim8} we did not prove the relation analogous
to (\ref{descsh}). Then when speaking of standard resolution of
the family with nonreduced base we do not mention such a relation.
\end{remark}

In section 1 we remind the definition of the functor  $\mathfrak
f^{GM}$ of moduli of coherent torsion-free sheaves ("Gieseker --
Maruyama functor") (\ref{funcGM}, \ref{famGM}) and improve the
definition of the functor $\mathfrak f$ of moduli of admissible
semistable pairs (\ref{funcmy}, \ref{class}). The rank $r$ and
polynomial $p(n)$ are fixed and equal for both moduli functors.
After that we give the description of the transformation of a
family of semistable admissible pairs
 $((\pi: \widetilde \Sigma \to T, \widetilde \L),
\widetilde \E)$ with (possibly, nonreduced) base scheme $T$ to a
family $\E$ of coherent torsion-free semistable sheaves with the
same base $T$. The transformation provides a morphism of the
functor of admissible semistable pairs ${\mathfrak f}$ to Gieseker
-- Maruyama functor $\mathfrak f^{GM}$.

In section 2 we show that the morphism of functors we constructed
is an inverse for the morphism $\underline \kappa: {\mathfrak
f}^{GM} \to {\mathfrak f}$ built up in  \cite{Tim8}. In this way
the functors of interest  (namely, their subfunctors corresponding
to families containing locally free sheaves and $S$-pairs
respectively) are isomorphic.
\smallskip

In the present article we prove  following results.
\begin{theorem}\label{thfunc} There is a natural transformation
$\underline \tau: {\mathfrak f} \to {\mathfrak f}^{GM}$ of every
maximal closed irreducible subfunctor of the moduli functor of
admissible semistable pairs containing $S$-pairs to the
corresponding maximal closed irreducible subfunctor of Gieseker --
Maruyama moduli functor which contains locally free sheaves with
same rank and Hilbert polynomial. This natural transformation is
inverse to the natural transformation $\underline \kappa$
constructed in \cite{Tim8} and induced by the procedure of
standard resolution developed in the same article.
 Hence both morphisms of nonreduced moduli functors $\underline \kappa:
\mathfrak f^{GM}\to \mathfrak f$ and $\underline \tau: {\mathfrak
f} \to {\mathfrak f}^{GM}$ are isomorphisms.
\end{theorem}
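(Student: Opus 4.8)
The plan is to construct the natural transformation $\underline\tau\colon \mathfrak f\to\mathfrak f^{GM}$ family-by-family and then verify that the two compositions with $\underline\kappa$ are the identity. Given a family $((\pi\colon\widetilde\Sigma\to T,\widetilde\L),\widetilde\E)$ of admissible semistable pairs over a (possibly nonreduced) base $T$, one has the blow-down morphism $\Phi\colon\widetilde\Sigma\to T\times S$ (identity on the base, by the nonreduced standard resolution of \cite{Tim8}), and I would set $\underline\tau((\pi,\widetilde\L),\widetilde\E):=(\Phi_*\widetilde\E)$, or rather its image in the Gieseker--Maruyama functor after checking it is $T$-flat, torsion-free on each fibre, semistable with the prescribed Hilbert polynomial $rp(n)$, and that over the open locus $T_0$ where $\widetilde\E$ restricts to a bundle on the trivial fibres $S$ it recovers the original sheaf. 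The fibrewise statement is essentially the content of the earlier discussion: on an $S$-pair $((S,L),E)$ with $E$ locally free the map is the identity, and on a fibre $\pi^{-1}(t)=\widetilde S=\bigcup_i\widetilde S_i$ the sheaf $\Phi_*\widetilde E$ coincides with $(\sigma_*\widetilde E)$, whose double dual is $E^{\vee\vee}$ by \eqref{descsh}, with the quasi-ideality relation \eqref{quasiideal} pinning down $\widetilde E_i$ in terms of $q_0$; hence $\Phi_*\widetilde E$ is forced to be the semistable sheaf $E$ sitting in $0\to E\to E^{\vee\vee}\to\varkappa\to 0$.

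Next I would check functoriality: for a morphism $T'\to T$ of base schemes the formation of $\Phi$ and of $\Phi_*\widetilde\E$ commutes with base change (flatness of $\widetilde\E$ over $T$ and the fact that $\Phi$ is a blow-up, hence an isomorphism away from a locus not containing any fibre component $\widetilde S_0$, give the needed base-change isomorphism for the pushforward). This yields a natural transformation on each maximal closed irreducible subfunctor of $\mathfrak f$ containing $S$-pairs, landing in the maximal closed irreducible subfunctor of $\mathfrak f^{GM}$ containing locally free sheaves — the match of subfunctors is guaranteed because $S$-pairs with locally free $E$ map to locally free sheaves, and closed irreducible subfunctors are determined by their behaviour on reduced parameter schemes together with the nonreduced thickening, which $\underline\tau$ and $\underline\kappa$ respect.

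Then comes the core verification: $\underline\tau\circ\underline\kappa=\id_{\mathfrak f^{GM}}$ and $\underline\kappa\circ\underline\tau=\id_{\mathfrak f}$. For the first, starting from a family $\E$ of torsion-free semistable sheaves, $\underline\kappa$ applies the standard resolution of \cite{Tim8} producing $((\pi,\widetilde\L),\widetilde\E)$ with $\widetilde\E=\Phi^*\E/tor\!s$ (fibrewise $\sigma^*E/tor\!s$), and then $\underline\tau$ pushes forward by $\Phi$; I would show $\Phi_*(\Phi^*\E/tor\!s)\cong\E$ by working over the normal integral total space in the reduced case (where \eqref{descsh} and the identification $H^0(\widetilde S,\widetilde E\otimes\widetilde L)\cong H^0(S,E\otimes L)$ from the excerpt give equality of reflexive hulls plus equality of the relevant ideals, forcing $\Phi_*\widetilde\E=\E$ on the nose since both are torsion-free with the same double dual and the same colength-$\varkappa$ subsheaf), and then extending over the nonreduced thickening by flatness and the universal property. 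For the reverse composition, starting from a pair family, $\underline\kappa(\underline\tau(\cdot))$ re-resolves $\Phi_*\widetilde\E$; one shows the resolution reproduces $\widetilde\Sigma\to T$ (the Fitting-ideal blow-up data $I=\FFitt^0\EExt^1(\Phi_*\widetilde\E,\OO)$ is recovered fibrewise because $\EExt^1(E,\OO_S)\cong\EExt^2(\varkappa,\OO_S)$ depends only on $\varkappa=E^{\vee\vee}/E$, which is intrinsic to the pair via quasi-ideality) and that $\Phi^*(\Phi_*\widetilde\E)/tor\!s\cong\widetilde\E$, using local freeness of $\widetilde\E$ and the explicit description of $tor\!s$ on each $\widetilde S_i$.

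The main obstacle, I expect, is the nonreduced part of both identities: over a nonreduced base (or when additional components $\widetilde S_i$ carry nonreduced structure) the reflexive-hull argument of \eqref{descsh} — which is explicitly only proved in the reduced, integral, normal setting — is not directly available, and \cite{Tim8} deliberately omits the analogue of \eqref{descsh}. So the delicate step will be to promote the equality $\Phi_*\widetilde\E=\E$ (resp.\ $\Phi^*\Phi_*\widetilde\E/tor\!s=\widetilde\E$) from the reduced fibres/base to the full scheme structure. I would handle this by a flatness-and-cohomology argument: both $\underline\tau\circ\underline\kappa(\E)$ and $\E$ are $T$-flat families inducing the same family over $T_{\mathrm{red}}$ and agreeing on the locally free locus $T_0$ (which is scheme-theoretically dense in each maximal irreducible subfunctor's parameter scheme by the closed-irreducible-subfunctor hypothesis); an infinitesimal/deformation-theoretic comparison — or simply the fact that the Gieseker--Maruyama moduli functor is separated, so two maps from a scheme agreeing on a dense open subscheme and on the reduction coincide — then forces equality. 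Making the "agree on a dense open" and "separatedness" precise at the level of functors, rather than of the not-yet-known-representable moduli spaces, is the point requiring the most care, and is presumably where the hypothesis that the subfunctors be \emph{maximal closed irreducible} and \emph{contain $S$-pairs / locally free sheaves} does its work.
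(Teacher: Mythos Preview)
Your construction of $\underline\tau$ via $\Phi_*\widetilde\E$ has a genuine gap at the very first step: for an \emph{arbitrary} family of admissible pairs in $\mathfrak f_T$, the data only give birational $S$-triviality (an isomorphism of open subschemes $\widetilde\Sigma_0\cong\Sigma_0\subset T\times S$), not a global morphism $\Phi:\widetilde\Sigma\to T\times S$. The morphism $\Phi$ you invoke from \cite{Tim8} is part of the \emph{output} of the standard resolution, available when the family was produced from a family of sheaves; it is not part of the input data defining $\mathfrak f$. Even granting $\Phi$, you would still owe an argument that $\Phi_*\widetilde\E$ is $T$-flat with the correct fibrewise Hilbert polynomial --- pushforward along a nonflat morphism does not preserve flatness, and you give no mechanism for this. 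Finally, your proposed nonreduced extension (``separatedness of the GM functor, agreement on $T_{\mathrm{red}}$ and on $T_0$'') is not enough: separatedness gives uniqueness of limits along DVRs, not that two flat families agreeing on a dense open and on the reduction coincide over a nonreduced base.

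The paper sidesteps all three issues by \emph{not} using pushforward. Its $\underline\tau$ is built through the relative Quot scheme: from the surjection $\pi^*\V\twoheadrightarrow\widetilde\E\otimes\widetilde\L^m$ (with $\V=\pi_*(\widetilde\E\otimes\widetilde\L^m)$) one gets a quotient $\V\boxtimes L^{-m}|_{\Sigma_0}\twoheadrightarrow\widetilde\E|_{\widetilde\Sigma_0}$ on the open piece, hence a locally closed immersion of $\widetilde\Sigma_0$ into $\Quot^{rp(n)}(\V\boxtimes L^{-m})\times S$; the scheme-theoretic image $T'$ in the Quot scheme is shown to map isomorphically to $T$ by a unique-continuation argument for the quotient epimorphism (irreducibility of $T'\times S$ forces a section vanishing on $\widetilde\Sigma_0$ to vanish everywhere). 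The resulting $\E$ is the restriction of the universal quotient, automatically $T$-flat. For $\underline\tau\circ\underline\kappa=\id$ the paper uses the same unique-continuation principle. For $\underline\kappa\circ\underline\tau=\id$ it embeds both $\widetilde\Sigma$ and the re-resolved $\widetilde\Sigma'$ into $\Univ^{P(n)}\Grass(\V,r)$ and compares their images via the Hilbert scheme; the key technical point is that the three bundles $\V=\pi_*(\widetilde\E\otimes\widetilde\L^m)$, $\V_0=p_*(\E\otimes\L^m)$, $\V'=\pi'_*(\widetilde\E'\otimes(\widetilde\L')^m)$ become isomorphic after twisting by line bundles from $T$, and this is proved by producing a map of locally free $\OO_T$-modules of equal rank which is an isomorphism on $T_{\mathrm{red}}$, then invoking the elementary fact that an endomorphism of a finite module which is an automorphism modulo the nilradical is an automorphism. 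That last lemma is exactly the device that handles the nonreduced structure, replacing your separatedness sketch.
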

\begin{corollary}\label{thsch}
The union of main components of nonreduced moduli scheme
$\widetilde M$ for ${\mathfrak f}$ is isomorphic to the union of
main components of nonreduced Gieseker -- Maruyama scheme
$\overline M$ for sheaves with same rank and Hilbert polynomial.
\end{corollary}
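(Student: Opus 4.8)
The plan is to deduce the scheme isomorphism from the functor isomorphism of Theorem \ref{thfunc} by means of the universal property of corepresenting objects, being careful that the (possibly nonreduced) scheme structures match. First I would recall that, by the constructions cited in the Introduction, the scheme $\widetilde M$ corepresents the functor $\mathfrak f$ (see \cite{Tim6}) and the Gieseker--Maruyama scheme $\overline M$ corepresents $\mathfrak f^{GM}$ (see \cite{Gies}); that is, there are universal natural transformations $u:\mathfrak f \to h_{\widetilde M}$ and $v:\mathfrak f^{GM}\to h_{\overline M}$ to the functors of points, each universal among natural transformations to representable functors. The key identification is that a maximal closed irreducible subfunctor of $\mathfrak f$ containing $S$-pairs is corepresented precisely by a main component of $\widetilde M$ --- the closure of the locus $\widetilde M_0$ of $S$-pairs --- endowed with its full scheme structure, and symmetrically each maximal closed irreducible subfunctor of $\mathfrak f^{GM}$ containing locally free sheaves is corepresented by a main component of $\overline M$, the closure of the vector-bundle locus $M_0$. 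Taking unions over all such subfunctors, the union of main components of $\widetilde M$ corepresents the subfunctor of $\mathfrak f$ generated by $S$-pairs, and the union of main components of $\overline M$ corepresents the subfunctor of $\mathfrak f^{GM}$ generated by locally free sheaves.

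Next I would transport the natural transformations of Theorem \ref{thfunc}. Composing $\underline\tau:\mathfrak f \to \mathfrak f^{GM}$ with the universal transformation $v$ gives a natural transformation from the union of main subfunctors of $\mathfrak f$ to $h_{\overline M}$; by the corepresenting property of the union of main components of $\widetilde M$ this factors through a unique morphism of schemes $\tau_M:\bigcup\widetilde M_{\mathrm{main}}\to\bigcup\overline M_{\mathrm{main}}$ characterized by $h_{\tau_M}\circ u = v\circ\underline\tau$. Symmetrically, $\underline\kappa$ produces a unique morphism $\kappa_M$ in the opposite direction with $h_{\kappa_M}\circ v = u\circ\underline\kappa$.

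The hard part, and the step I expect to be the main obstacle, is to verify that $\tau_M$ and $\kappa_M$ are mutually inverse as morphisms of schemes --- which must mean an isomorphism of the full scheme structures, not merely a bijection on closed points. Here I would invoke the uniqueness clause of the universal property. Since $\underline\kappa\circ\underline\tau=\mathrm{id}$ and $\underline\tau\circ\underline\kappa=\mathrm{id}$ as natural transformations (this is exactly the content of Theorem \ref{thfunc}), one computes $h_{\kappa_M\circ\tau_M}\circ u = h_{\kappa_M}\circ v\circ\underline\tau = u\circ\underline\kappa\circ\underline\tau = u = h_{\mathrm{id}}\circ u$, so by the uniqueness of the morphism through which $u$ factors itself, $\kappa_M\circ\tau_M=\mathrm{id}$, and likewise $\tau_M\circ\kappa_M=\mathrm{id}$. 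Because $\underline\tau$ and $\underline\kappa$ are isomorphisms of the full functors on \emph{all} test schemes $T$, including nonreduced ones, the corepresenting objects agree scheme-theoretically; this is precisely what forces $\tau_M$ to respect nilpotents and hence to identify the nonreduced moduli structures carried by the main components. The delicate point throughout is that $\widetilde M$ and $\overline M$ only corepresent rather than represent their functors, so Yoneda cannot be applied directly and one must argue entirely through the universal factorization and its uniqueness; verifying that the generic points of the main components indeed correspond (so that the subfunctor/component dictionary is exact) is where the input from \cite{Tim6} and the description of $\widetilde M_0\cong M_0$ is essential. This yields the asserted isomorphism of the unions of main components.
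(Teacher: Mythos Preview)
Your proposal is correct and follows essentially the same approach as the paper: the Corollary is not given a separate proof in the paper but is treated as an immediate consequence of Theorem~\ref{thfunc} via the universal property of corepresenting objects, with the passage from functor morphisms to scheme morphisms referred to as a ``well-known procedure'' (citing \cite{Tim8}). Your explicit write-up of the uniqueness argument for corepresenting objects is exactly what underlies this, and the paper's intermediate observation that $\kappa$ is a closed immersion (via the section lemma in Section~2) is a concrete byproduct rather than a genuinely different route.
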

\smallskip

\section{Morphism of moduli functors}

Following \cite[ch. 2, sect. 2.2]{HL}, we recall some definitions.
Let ${\mathcal C}$ be a category, ${\mathcal C}^o$ its dual,
 ${\mathcal C}'={{\mathcal F}unct}({\mathcal C}^o, Sets)$ category of
 functors to the category of sets. By Yoneda's lemma, the functor  ${\mathcal C} \to
{\mathcal C}': F\mapsto (\underline F: X\mapsto {\rm Hom
\,}_{{\mathcal C}}(X, F))$ includes ${\mathcal C}$ into ${\mathcal
C}'$ as full subcategory.

\begin{definition}\label{corep}\cite[ch. 2, definition 2.2.1]{HL}
The functor  ${\mathfrak f} \in {\OO}b\, \CC'$ is {\it
corepresented by the object} $M \in {\OO}b \,\CC$, if there exist
a $\CC'$-morphism $\psi : {\mathfrak f} \to \underline M$ such
that any morphism $\psi': {\mathfrak f} \to \underline F'$ factors
through the unique morphism  $\omega: \underline M \to \underline
F'$.
\end{definition}

\begin{definition} The scheme  $\widetilde M$ is a {\it coarse moduli space}
for the functor $\mathfrak f$ if  $\mathfrak f$ is corepresented
by the scheme
 $\widetilde M.$
 \end{definition}

Let $T,S$ be  schemes over a field $k$, $\pi: \widetilde \Sigma
\to T$ a morphism of $k$-schemes. We introduce the following

\begin{definition}\label{bitriv} The family of schemes
$\pi: \widetilde \Sigma \to T$ is {\it birationally $S$-trivial}
if there exist isomorphic open subschemes $\widetilde \Sigma_0
\subset \widetilde \Sigma$ and $\Sigma_0 \subset T\times S$ and
there is a scheme equality $\pi(\widetilde \Sigma_0)=T$.
\end{definition}
The former equality means that all fibres of the morphism $\pi$
have nonempty intersections with the open subscheme $\widetilde
\Sigma_0$.

In particular, if $T=\Spec k$ then $\pi$ is a constant morphism
and $\widetilde \Sigma_0 \cong \Sigma_0$ is open subscheme in $S$.

Since in the present paper we consider only $S$-birationally
trivial families, they will be referred to as {\it birationally
trivial} families.

We consider sets of families of semistable pairs
\begin{equation}\label{class}{\mathfrak F}_T= \left\{
\begin{array}{l}\pi: \widetilde \Sigma \to T \mbox{\rm \;\;birationally $S$-trivial
},\\
\widetilde \L\in \Pic \widetilde \Sigma \mbox{\rm \;\;flat over
}T,\\
\mbox{\rm for }m\gg 0 \; \widetilde \L^m \; \mbox{\rm very ample
relatively }T,\\ \forall t\in T \;\widetilde L_t=\widetilde
\L|_{\pi^{-1}(t)}
\mbox{\rm \; ample;}\\
(\pi^{-1}(t),\widetilde L_t) \mbox{\rm \;admissible scheme with
distinguished polarization}; \\
\chi (\widetilde L_t^n) \mbox{\rm \; does not depend on }t,\\
 \widetilde \E \;\; \mbox{\rm locally free } \OO_{\Sigma}-\mbox{\rm
sheaf flat over } T;\\
 \chi(\widetilde \E\otimes\widetilde \L^{n})|_{\pi^{-1}(t)})=
 rp(n);\\
 ((\pi^{-1}(t), \widetilde L_t), \widetilde \E|_{\pi^{-1}(t)}) -
 \mbox{\rm semistable pair}
 \end{array} \right\} \end{equation}  and a functor
\begin{equation}\label{funcmy}\mathfrak f: (Schemes_k)^o
\to (Sets)\end{equation} from the category of $k$-schemes to the
category of sets. It attaches to any scheme $T$ the set of
equivalence classes of families of the form $({\mathfrak
F}_T/\sim).$

The equivalence relation  $\sim$ is defined as follows. Families
$((\pi: \widetilde \Sigma \to T, \widetilde \L), \widetilde \E)$
and \linebreak $((\pi': \widetilde \Sigma \to T, \widetilde \L'),
\widetilde
 \E')$ from the class  $\mathfrak F_T$ are said to be equivalent
 (notation:\linebreak
 $((\pi: \widetilde \Sigma \to T, \widetilde \L),
 \widetilde \E) \sim ((\pi': \widetilde \Sigma \to T, \widetilde \L'), \widetilde
 \E')$) if\\
 1) there exist an isomorphism $ \iota: \widetilde \Sigma \stackrel{\sim}{\longrightarrow}
 \widetilde \Sigma'$ such that the diagram \begin{equation*}
 \xymatrix{\widetilde \Sigma  \ar[rd]_{\pi}\ar[rr]_{\sim}^{\iota}&&\widetilde \Sigma ' \ar[ld]^{\pi'}\\
&T }
 \end{equation*} commutes.\\
 2) There exist line bundles   $L', L''$ on the scheme  $T$ such
 that $\iota^{\ast}\widetilde \E' = \widetilde \E \otimes \pi^{\ast} L',$
 $\iota^{\ast}\widetilde \L' = \widetilde \L \otimes \pi^{\ast} L''.$

Now discuss what is the "size"\, of the maximal under  inclusion
of those open sub\-schemes $\widetilde \Sigma_0$ in a family of
admissible schemes $\widetilde \Sigma$, which are isomorphic to
appropriate open subschemes in $T\times S$ in the definition
\ref{bitriv}. The set $F=\widetilde \Sigma \setminus \widetilde
\Sigma_0$ is closed. If $T_0$ is open subscheme in $T$ whose
points carry fibres isomorphic to $S$, then $\widetilde \Sigma_0
\supsetneqq \pi^{-1}T_0$ (inequality is true because $\pi
(\widetilde \Sigma_0)=T$ in the definition \ref{bitriv}). The
subscheme $\Sigma_0$ which is open in $T\times S$ and isomorphic
to $\widetilde \Sigma_0$, is such that $\Sigma_0 \supsetneqq
T_0\times S$. If $\pi:\widetilde \Sigma \to T$ is family of
admissible schemes then $\widetilde \Sigma_0 \cong \widetilde
\Sigma \setminus F$, and $F$ is (set-theoretically) the union of
additional components of fibres which are non-isomorphic to $S$.


\smallskip

The Gieseker -- Maruyama functor
\begin{equation}\label{funcGM}{\mathfrak f}^{GM}:
(Schemes_k)^o \to Sets,\end{equation} attaches to any scheme $T$
the set of equivalence classes of families of the following form
${\mathfrak F}_T^{GM}/\sim$, where
\begin{equation}\label{famGM} \mathfrak
F_T^{\,GM}= \left\{
\begin{array}{l} \E \;\;\mbox{\rm sheaf of } \OO_{T\times S}-
\mbox{\rm modules flat over } T;\\
\L \;\;\mbox{\rm invertible sheaf of } \OO_{T\times S}-\mbox{\rm
modules,}\\ \mbox{\rm  ample relatively to } T\\
\mbox{\rm and such that } L_t:=\L|_{t\times S}\cong L\; \mbox{\rm for any point } t\in T;\\
E_t:=\E|_{t\times S} \;\mbox{\rm torsion-free and Gieseker-semistable;}\\
\chi(E_t \otimes L_t^n)=rp(n).\end{array}\right\}
\end{equation}

Families   $\E, \L$ and $\E',\L'$ from the class $\mathfrak
F^{GM}_T$ are said to be equivalent (notation: $ (\E, \L)\sim
(\E',\L')$), if there exist linebundles  $L', L''$ on the scheme
$T$ such that $\E' = \E \otimes p^{\ast} L',$ $ \L' =  \L \otimes
p^{\ast} L''$ where $p: T\times S \to T$ is projection onto the
factor.

\begin{remark} Since $\Pic (T \times S) =\Pic T \times \Pic
S$, our definition of the moduli functor ${\mathfrak f}^{GM}$ is
equivalent to the standard definition which can be found, for
example, in \cite{HL}: the difference in choice of polarizations
$\L$ and $\L'$ having isomorphic restrictions on fibres over the
base $T$, is avoided by the equivalence which is induced by
tensoring by inverse image of an invertible sheaf $L''$ from the
base $T$.
\end{remark}

The morphism of functors  $\underline \kappa: \mathfrak f^{GM} \to
\mathfrak f$ в \cite{Tim8} is defined by commutative diagrams
\begin{equation}\label{morfun}\xymatrix{T \ar@{|->}[rd] \ar@{|->}[r]&
\mathfrak F^{GM}_T/\sim \ar[d]\\
& \mathfrak F_T/\sim}
\end{equation}
where $T\in {\mathcal O}b RSch_k$, $\underline
\kappa(T):(\mathfrak F^{GM}_T/\sim) \to ( \mathfrak F_T/\sim)$ is
a morphism in the category of sets (mapping).

\begin{remark}\label{deformability}
We consider the subfunctors in $\mathfrak f^{GM}$ (resp. in
${\mathfrak f}$) which correspond to unions of maximal irreducible
substacks containing locally free sheaves (resp. $S$-pairs). Then
any family $(\L,\E)$ (resp. $((\pi:\widetilde \Sigma \to T,
\widetilde L), \widetilde E)$) with base $T$ can be include into
the family $(\L',\E')$ (resp. $((\pi':\widetilde \Sigma'\to T',
\widetilde \L'), \widetilde \E')$) with some connected and,
possibly, nonreduced base $T'$ and containing locally free sheaves
(resp. $S$-pairs), according to  fibred diagrams
\begin{equation*}\label{deffam}\xymatrix{T \ar@{^(->}[d]_i & \ar[l] T\times S \ar@{^(->}[d]^{(i,id_S)} \\
T' & \ar[l] T' \times S} (\mbox{\rm resp., }
\xymatrix{T \ar@{^(->}[d]_i & \ar[l] \widetilde \Sigma \ar@{^(->}[d]^{\widetilde i} \\
T' & \ar[l] \widetilde  \Sigma'})
\end{equation*}
Namely, $\E=(i, id_S)^{\ast}\E'$ (resp., $\widetilde \Sigma=
\widetilde \Sigma' \times _{T'} T$, $\widetilde i: \widetilde
\Sigma \hookrightarrow \widetilde \Sigma'$ is induced morphism of
immersion, $\widetilde \E = \widetilde i^{\ast} \widetilde \E',$
$\widetilde \L= \widetilde i^{\ast} \widetilde \L'$). We can
assume that  $T'$ is such that its reduction $T'_{red}$ is an
irreducible scheme. This means that we consider admissible
semistable pairs which are deformation equivalent to $S$-pairs
\cite{Tim4}.

We mean under the Gieseker  -- Maruyama scheme $\overline M$ the
union of those components of nonreduced moduli scheme for
semistable coherent torsion-free sheaves, which contain locally
free sheaves, and under the moduli scheme $\widetilde M$ the union
of its components containing $S$-pairs.
\end{remark}

Further we show that there is a morphism of the nonreduced moduli
functor of admiss\-ible semistable pairs to the nonreduced
Gieseker -- Maruyama moduli functor. Namely, for any scheme $T$ we
build up a correspondence $((\pi:\widetilde \Sigma \to T,
\widetilde \L), \widetilde \E)\mapsto (\L, \E)$. It defines a set
mapping
 $(\{((\pi:\widetilde \Sigma \to T, \widetilde
\L), \widetilde \E)\}/\sim)\to (\{\L,\E\}/\sim)$. This means that
the family of semistable coherent torsion-free sheaves $\E$ with
the same base $T$ can be constructed by any family
$((\pi:\widetilde \Sigma \to T, \widetilde \L), \widetilde \E)$ of
admissible semistable pairs which is birationally trivial and flat
over $T$.

Let  $((\widetilde \Sigma, \widetilde \L), \widetilde \E)$ be
birationally trivial family of admissible semistable pairs with
base scheme  $T$. We assume that the scheme $T_{red}$ is
irreducible and contains at least one closed point corresponding
to $S$-pair, i.e. the point  $x \in T$ such that
$\pi^{-1}(x)=\widetilde S_x\cong S.$ The former condition is
provided by the remark \ref{deformability}. In such a point
$\widetilde E|_{\pi^{-1}(x)}=\widetilde E_x$ is locally free sheaf
on the surface  $S$. It is Gieseker-semistable with respect to the
polarization $\widetilde \L|_{\widetilde S_x}=\widetilde L_x\cong
L$.  Let  $\widetilde \Sigma_0$ be the maximal open subscheme in
$\widetilde \Sigma$ which is isomorphic to an open subscheme of
the product $T\times S$. Choose  $m\gg 0$ such that the morphism
of $\OO_{\widetilde \Sigma}$-modules $\pi^{\ast} \pi_{\ast}
(\widetilde \E \otimes \widetilde \L^m) \to \widetilde \E \otimes
\widetilde \L^m$ is surjective. After tensoring $\widetilde \E$ by
an appropriate invertible sheaf from the base $T$ if necessary,
for locally free $\OO_T$-sheaf $\V :=\pi_{\ast}( \widetilde \E
\otimes \widetilde \L^m)$ we have $\pi^{\ast} \V \otimes
\widetilde \L^{-m} |_{\widetilde \Sigma_0} \cong \V \boxtimes
L^{-m}|_{\Sigma_0}$ in view of the isomorphism $\widetilde
\Sigma_0 \cong \Sigma_0$, and there is an epimorphism of
$\OO_{\widetilde \Sigma_0}$-modules $\V \boxtimes L^{-m}|_{
\Sigma_0}\twoheadrightarrow \widetilde \E|_{\widetilde \Sigma_0}.$
Recall that if $T_0$ is a subset of those points in $T$ which
correspond to $S$-pairs then $\pi^{-1}T_0 \subsetneqq \widetilde
\Sigma_0$.

Consider relative Grothendieck' scheme  $\Quot^{rp(n)} (\V
\boxtimes L^{-m}) \to T$. It carries a universal
$\OO_{\Quot^{rp(n)} (\V \boxtimes L^{-m}) \times S}$-quotient
sheaf
$$\V \boxtimes_T \OO_{\Quot^{rp(n)} (\V \boxtimes L^{-m})}\boxtimes
L^{-m} \twoheadrightarrow \E_{\Quot}.$$

The morphism  $\pi^{\ast} \V \twoheadrightarrow \widetilde \E
\otimes \widetilde \L^m$ induces a morphism of  $T$-schemes
$$\widetilde \Sigma_0 \to \Quot^{rp(n)} (\V \boxtimes L^{-m})
\times S,$$ which is locally closed immersion.

Let $T' \subset \Quot^{rp(n)} (\V \boxtimes L^{-m})$ be (possibly
nonreduced) subscheme formed by all quotient sheaves of the form
$q_t: V \otimes L^{-m}\twoheadrightarrow E_t$ such that
 $q_t|_{\Sigma_0 \cap (t\times S)}$ is isomorphic to $(\V
\boxtimes L^{-m}) |_{\Sigma_0\cap (t\times S)}\twoheadrightarrow
\widetilde \E|_{\Sigma_0\cap (t\times S)}.$ The symbol $V$ denotes
a  $k$-vector space  $V\cong H^0(\widetilde S_t=\pi^{-1}(t),
\widetilde E_t \otimes \widetilde L_t^m)$ of dimension $rp(m)$
which is isomorphic to the fibre of the vector bundle  $\V$ at a
point $t\in T$. Equivalently, $T'$ is a scheme-theoretic image of
the subscheme  $\widetilde \Sigma_0$ in $\Quot^{rp(n)} (\V
\boxtimes L^{-m})$. We have the following commutative diagram of
$T$-schemes with fibred square
\begin{equation*}\xymatrix{
\widetilde \Sigma_0\ar[dr]\ar@{^(->}[r]& T'\times S \ar[d]
\ar@{^(->}[r]&\Quot^{rp(n)} (\V \boxtimes L^{-m}) \times S
\ar[d]\\
&T'\ar@{^(->}[r] \ar@{->>}[rd]_{\tau}& \Quot^{rp(n)} (\V \boxtimes
L^{-m}) \ar[d]\\
&&T}
\end{equation*}
The double arrow in the diagram of schemes  means that the
scheme-theoretic image of the morphism $\tau$ coincides with its
target, i.e. the image of the scheme $T'$ under the morphism
$\tau$ has same scheme structure as $T$. It it true because the
image of the subscheme $\widetilde \Sigma_0$ under the projection
on  $T$ coincides with $T$.

We claim that  $\tau$ is an isomorphism. For the proof consider at
first a closed point ${\mathfrak m} \in T$, the image of the fibre
$\widetilde \Sigma_0\cap \pi^{-1}(\mathfrak m)$ in $\Quot^{rp(n)}
(\V \boxtimes L^{-m})\times S$ (which is denoted also as
$\widetilde \Sigma_0\cap \pi^{-1}(\mathfrak m)$) and an
epimorphism corresponding to the point  $\mathfrak m$
\begin{equation}\label{forcont}\V \boxtimes L^{-m}
|_{\widetilde \Sigma_0\cap \pi^{-1}(\mathfrak
m)}\twoheadrightarrow \E|_{\widetilde \Sigma_0\cap
\pi^{-1}(\mathfrak m)}.\end{equation} Note that in the situation
of the remark \ref{deformability} the image of the subset
$\pi^{-1}(\mathfrak m) \setminus (\widetilde \Sigma_0\cap
\pi^{-1}(\mathfrak m))$ in  $S$ is a finite collection of points
on the surface $S={\mathfrak m} \times S\subset \Quot^{rp(n)} (\V
\boxtimes L^{-m}) \times S$. Indeed, this is a "limit"\, of closed
immersions of surfaces isomorphic to $S$ with semistable locally
free sheaves on them (in the sense of Gieseker -- Maruyama
functor). It is a surface isomorphic to $S$ again, with semistable
coherent torsion-free sheaf on it.

The subscheme $U:=(\widetilde \Sigma_0\cap \pi^{-1}(\mathfrak m))
\subset {\mathfrak m} \times S$ is non-affine and strictly greater
then any proper affine subscheme in $S$ which does not contain the
subset $S \setminus U$.

We will show that the morphism (\ref{forcont}) has a unique
continuation to the whole of the subscheme  ${\mathfrak m}\times
S$. This means that the submodule $$\ker (\V \boxtimes
L^{-m}|_{\widetilde \Sigma_0\cap \pi^{-1}(\mathfrak m)} \to
\E_{\Quot}|_{\widetilde \Sigma_0\cap \pi^{-1}(\mathfrak m)})$$ has
a unique continuation to the whole of the subscheme  ${\mathfrak
m}\times S.$ In the sequel we use the notation
$E:=\E_{\Quot}|_{{\mathfrak m}\times S}.$

For any open  $U' \subset S$ such that  $U' \cap (S \setminus U)
\ne \emptyset$, the element  $f\in (V\otimes
L^{-m})(U')=\V\boxtimes L^{-m}|_{\mathfrak m \times S}(U')$
vanishing in $E(U'\cap U),$ maps to $0\in E(U')$ on the whole of
$U'$. Then in the fibre over the closed point ${\mathfrak m} \in
T$ there is a unique continuation of the epimorphism  $V \otimes
L^{-m}|_U \twoheadrightarrow E|_U$ to the homomorphism  $V \otimes
L^{-m} \twoheadrightarrow E.$ Since the morphism $\kappa_{red}:
\overline M \to \widetilde M$ is bijective \cite{Tim4} and any
semistable pair $((\widetilde S, \widetilde L), \widetilde E)$
corresponds to a coherent semistable torsion-free sheaf $E$, and
for  $m\gg 0$ the homomorphism we built $V \otimes L^{-m}
\twoheadrightarrow E$ defines a point in $Q \subset \Quot^{rp(n)}
(V \otimes L^{-m})$, then this continuing homomorphism is an
epimorphism.

Now turn to the continuation of the epimorphism $\V \boxtimes
L^{-m}|_{\widetilde \Sigma_0}\twoheadrightarrow
\E_{\Quot}|_{\widetilde \Sigma_0}$. Let $\U' \subset T'\times S$
be an open subscheme such that $\U' \cap \widetilde \Sigma_0\ne
\emptyset.$ Assume that there exist an element $f\in (\V \boxtimes
L^{-m})(\U')$ vanishing in $\E_{\Quot}|_{T'\times S}(\U' \cap
\widetilde \Sigma_0)$ but not in  $\E_{\Quot}|_{T'\times S}(\U').$
This means that $f|_{\U'\cap (T'\times S \setminus \widetilde
\Sigma_0)}\ne 0$ what leads to the decomposition of irreducible
topological space $T'\times S$ into the disjoint union of two open
subsets. This contradiction proves unique continuation of the
epimorphism
 $\V \boxtimes_T \OO_{\Quot^{rp(n)} (\V \boxtimes L^{-m})}\boxtimes L^{-m}|_{\widetilde
\Sigma_0}\twoheadrightarrow \E_{\Quot}|_{\widetilde \Sigma_0}$ to
the epimorphism $\V \boxtimes_T\OO_{\Quot^{rp(n)} (\V \boxtimes
L^{-m})}\boxtimes L^{-m}|_{T'\times S}\twoheadrightarrow \E$ where
$\E:=\E_{\Quot}|_{T'\times S}$.

Note that the correspondence we built $T \mapsto T'$ is functorial
and yields in a morphism of functors  $Mor_{Sch_k} (-, T) \to
Mor_{Sch_k} (-, T')$ and hence in natural transformation of
functors of points for schemes $T$ and $T'$. This means
\cite[lecture 3, Proposition]{Mum} that there is a morphism of
schemes  $\tau^{-1}: T \to T'$ which is inverse to $\tau$.

It rests to confirm ourselves that the subscheme  $T'\subset
\Quot^{rp(n)} (\V \boxtimes L^{-m})$ in whole lies in the
subscheme $Q$ corresponding to semistable coherent torsion-free
sheaves. For this purpose we assume that  $T'=\Spec A$ for $A$
being a local $k$-algebra of finite type  with a maximal ideal
${\mathfrak m}$. The closed point  ${\mathfrak m} \in T'$
corresponding to admissible semistable pair $((\widetilde S,
\widetilde L), \widetilde E)$, is taken in our construction to a
coherent $L$-semistable torsion-free sheaf  $E_{\mathfrak m}$.
Then, passing to  localizations of the ring  $A$ in each of its
prime ideals ${\mathfrak p}\in \Spec A=T'$, we can conclude that
all semistable admissible pairs which correspond to points of the
scheme $T'$, are taken to semistable torsion-free coherent
sheaves. Then $T'_{red}$ belongs to the subscheme $Q$ of
semistable coherent torsion-free sheaves. Properties of
torsion-freeness and of Gieseker-semistability are open in flat
families of coherent sheaves. Then if  $T'_{red}$ belongs to the
subscheme of semistable torsion-free sheaves the same is true for
$T'$.

Indeed, assume that  $T'$ does not belong to the subscheme of
torsion-free semistable sheaves. Since $E_{\mathfrak m}$ is
semistable and torsion-free, then there exists a nonempty closed
subscheme in  $T'$ such that it contains a closed point
corresponding to non-semistable sheaf or to a sheaf with torsion.
This is impossible because the only closed point of the scheme
$T'$ corresponds to semistable torsion-free sheaf.

We have proven that there is a natural transformation $\underline
\tau: {\mathfrak f} \to {\mathfrak f}^{GM}$ of the functor of
admissible semistable pairs to the functor of semistable coherent
torsion-free sheaves. This natural transformation is defined by
the series if commutative diagrams
\begin{equation*}\xymatrix{T \ar@{=}[d]\ar
@{|->}[r]^{\mathfrak f}&{\mathfrak F}_T/\sim \ar[d]\\
 T \ar@{|->}[r]^{\mathfrak f^{GM}}&{\mathfrak
F}^{GM}_{T}/\sim}
\end{equation*}
and leads to the morphism of moduli schemes  $\tau: \widetilde M
\to \overline M$ by well-known procedure. In particular, the
deduction of a morphism of moduli schemes from the morphism of
functors can be found in \cite{Tim8}.
\begin{remark} In \cite{Tim8} the natural transformation
$\underline \kappa: {\mathfrak f}^{GM}\to {\mathfrak f}$ and the
corresponding morphism of moduli schemes  $\kappa: \overline M \to
\widetilde M$ are built up.
\end{remark}

\section{Isomorphism of moduli functors}

The construction of the previous section establishes the morphism
of functors \linebreak $\underline \tau:{\mathfrak f } \to
{\mathfrak f}^{GM}$. In the paper  \cite{Tim8} the morphism in
opposite direction $\underline \kappa$ is constructed. It is
necessary to prove that these two morphisms are mutually inverse
and hence provide an isomorphism of functors.

First we show that $\underline \tau \circ {\underline
\kappa}=id_{{\mathfrak f}^{GM}}$. For this purpose take a family
of semistable torsion-free coherent sheaves $\E$ and a family of
polarizations $\L$. Tensoring these sheaves if necessary by
appropriate invertible $\OO_T$-sheaves, assume that locally free
sheaves \linebreak  $p_{\ast}(\E \otimes \L^m)$ and  $p_{\ast}
\L^m$ have 1st Chern class equal to 0. Apply the procedure of
standard resolution from \cite{Tim8} to the family chosen. This
leads to a family of admissible semistable pairs $((\pi:
\widetilde \Sigma \to T, \widetilde \L), \widetilde \E).$ Now
perform a transformation from sect.1 of the present paper. We get
a family of coherent torsion-free semistable sheaves  $\E''$ and a
family of polarizations  $\L''$ again. Now, tensoring both
$\OO_{T\times S}$-modules by appropriate invertible sheaves form
the base $T$ and getting $\OO_{T\times S}$-modules $\E'$ and $\L'$
respectively, we consider locally free $\OO_T$-sheaves
$p_{\ast}(\E' \otimes {\L'}^m)$ and $p_{\ast}{\L'}^m$ as having
1st Chern class equal to 0. Families  $(\L,\E)$ and $(\L', \E')$
coincide along the subscheme  $\Sigma_0$ in the notation of
sect.1. According to the reasoning in sect. 1, they coincide on
the whole of the product $\Sigma=T\times S$. From this we conclude
that $(\L,\E) \sim (\L', \E').$ The proof done means that that the
natural transformation $\underline \kappa: {\mathfrak f}^{GM} \to
{\mathfrak f}$ is a section of the natural transformation
$\underline \tau$ and hence the morphism of moduli spaces $\kappa:
\overline M \to \widetilde M$ is a section of the morphism
$\tau:\widetilde M \to \overline M.$ Then $\kappa: \overline M \to
\widetilde M$ is a closed immersion. This follows from the simple
lemma.
\begin{lemma} Let  $f: X\to Y$ be a scheme morphism and  $s: Y \to X$ its section.
Then $s$ is closed immersion.
\end{lemma}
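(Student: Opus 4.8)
The plan is to show that a section $s$ of a morphism $f\colon X\to Y$ is a closed immersion by realizing $s$ as the base change of the diagonal of $f$, and then invoking the fact that the diagonal of any morphism is an immersion, which becomes a closed immersion after suitable base change. First I would observe that from $f\circ s=\id_Y$ we obtain a commutative square expressing $s$ in terms of $\id_Y$ and the graph construction: the morphism $(s,\id_X)\colon X\to X\times_Y X$ (using $f$ on the first factor and $\id_X$ on the second) fits into a cartesian diagram whose other side is the diagonal $\Delta_f\colon X\to X\times_Y X$. Concretely, I would write down the fibre square
\begin{equation*}
\xymatrix{
X \ar[r]^{s} \ar[d]_{s} & X \ar[d]^{(s\circ f,\id_X)}\\
X \ar[r]^{\Delta_f\hphantom{ii}} & X\times_Y X
}
\end{equation*}
and check commutativity and the universal property directly from $f\circ s=\id_Y$; this exhibits $s$ as a pullback of $\Delta_f$.

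Next I would use that $\Delta_f$ is always an immersion (a locally closed immersion, for any morphism of schemes), hence so is its base change $s$. To upgrade ``immersion'' to ``closed immersion'', I would use that a section is automatically a \emph{monomorphism} whose composite with $f$ is the identity, and in particular $s$ is a split mono, so $X$ is identified with a locally closed subscheme $Z\subset Y$ with the property that the open subscheme $U\supset Z$ in which $Z$ is closed admits a retraction from all of $Y$; one then argues that $Z$ is already closed in $Y$. The cleanest route is: $s$ is an immersion, so it factors as $X\xrightarrow{\sim} Z\hookrightarrow U\hookrightarrow Y$ with $Z$ closed in the open $U$; since $f|_U\circ (Z\hookrightarrow U)$ is the inclusion $Z\hookrightarrow Y$ composed with nothing extra, and $s$ has closed image precisely when $Z=\overline{Z}$, one shows $Z$ is closed by noting that its complement in $Y$ is open: a point $y\notin Z$ either lies outside $U$ (an open condition) or lies in $U\setminus Z$ (also open). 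Thus $Y\setminus Z$ is open and $Z$ is closed, so $s$ is a closed immersion.

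I expect the main obstacle to be the passage from locally closed to closed: the cartesian-square argument gives an immersion essentially for free, but ruling out the ``locally closed but not closed'' possibility requires genuinely using that $s$ is a section and not merely an immersion. An alternative and perhaps slicker way to handle this step, which I would mention, is to note that a section is always the equalizer of $\id_X$ and $s\circ f$, hence separated over $Y$ in a strong sense, or simply to invoke the standard fact (e.g. from \cite{HL} or any scheme-theory reference) that sections of morphisms are always closed immersions \emph{provided one is careful}: in full generality one needs $f$ to be separated for $\Delta_f$ to be a closed immersion, and indeed the correct statement is that $s$ is always an immersion, and a closed immersion when $f$ is separated. In the present application $f=\tau\colon\widetilde M\to\overline M$ is a morphism of projective (hence separated) schemes, so the hypotheses are met; I would therefore state and use the lemma in the form ``a section of a separated morphism is a closed immersion,'' with the short proof being exactly the cartesian-square reduction to $\Delta_f$ above, which is a closed immersion by separatedness.
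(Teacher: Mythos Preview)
Your approach via the diagonal differs from the paper's: the paper simply passes to affine open subschemes $X=\Spec A$, $Y=\Spec B$ and observes that $s^\sharp\circ f^\sharp=\id_B$ forces $s^\sharp:A\to B$ to be surjective, hence $s$ is a closed immersion. That argument is short but does not justify the reduction to the affine case, and indeed cannot in full generality: the lemma as stated (with no separatedness hypothesis on $f$) is false, the affine line with doubled origin mapping to $\A^1$ giving the standard counterexample. Your final paragraph correctly diagnoses this and supplies the fix---assume $f$ separated, which in the paper's application to morphisms of projective $k$-schemes is automatic. So your route makes the needed hypothesis visible, while the paper's hides it in an unjustified reduction step.

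Two points to correct in your write-up. First, the fibre square has a typo: the upper-left vertex must be $Y$, not $X$, since both arrows labelled $s$ have source $Y$; likewise ``$(s,\id_X)\colon X\to X\times_Y X$'' does not type-check. The intended square is
\begin{equation*}
\xymatrix{
Y \ar[r]^{s} \ar[d]_{s} & X \ar[d]^{(s\circ f,\,\id_X)}\\
X \ar[r]^-{\Delta_f} & X\times_Y X
}
\end{equation*}
and one checks directly that it is cartesian. Second, the middle paragraph's attempt to upgrade ``immersion'' to ``closed immersion'' without separatedness is wrong: with $Z$ closed in an open $U\subset X$ (the target of $s$, not $Y$), the set $X\setminus U$ is \emph{closed}, not open, so the decomposition $X\setminus Z=(X\setminus U)\cup(U\setminus Z)$ does not show $X\setminus Z$ is open. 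This gap is not repairable---it is exactly where the doubled-origin example bites---so drop that paragraph and proceed directly to the separated case via $\Delta_f$ being a closed immersion, as your last paragraph already does.
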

\begin{proof} Since $f\circ s=\id_Y$, the morphism $s$ maps the scheme
$Y$ isomorphically to its image in $X$. Pass to affine subschemes
and assume that $X=\Spec A$, $Y=\Spec B$, where $A,B$ are
commutative rings, $f^{\sharp}: B \to A,$ $s^{\sharp}: A \to B$
their homomorphisms inducing corresponding scheme morphisms. Now
$s^{\sharp} \circ f^{\sharp}=\id_B$. Then $f^{\sharp}$ maps $B$
isomorphically to its image in  $A$ and  $s^{\sharp}$ maps a
subring  $f^{\sharp}(B) \subset A$ to $B$. Hence $s^{\sharp}$ is
surjective homomorphism and  $B$ is isomorphic to a quotient ring
of  $A$. Now conclude that  $s: \Spec B \to \Spec A$ is closed
immersion.
\end{proof}

We have proven that any admissible semistable pair $((\widetilde
S, \widetilde L), \widetilde E)$ corresponds to a coherent
semistable torsion-free sheaf $E$, and there is a closed immersion
$\overline M \hookrightarrow \widetilde M.$

Now confirm that  ${\underline \kappa} \circ \underline \tau
=id_{\mathfrak f}$. Let  $T_0$ be maximal nonempty open subscheme
in  $T$ whose closed points correspond to $S$-pairs. Restriction
of the family $((\widetilde \Sigma \to T, \widetilde \L),
\widetilde E)$ to $T_0$ induces a locally closed immersion $\mu_0:
T_0 \hookrightarrow \Quot^{rp(n)} (\V \boxtimes L^{-m})$. This
immersion in the composite with the structure projection to the
base  $T$ provides an isomorphism $\mu_0(T_0) \cong T_0$. In this
circumstance  $\mu_0^{\ast} \E_{\Quot}=\widetilde
\E|_{\pi^{-1}(T_0)}.$

Form a Grassmannian bundle  $\Grass (\V, r) \to T$ of $r$-quotient
spaces of fibres of the vector bundle $\V$. The fibre of the
bundle $\Grass (\V, r) \to T$ at the point $t\in T$ is usual
Grassman variety  $G(V_t, r)$. Since all vector spaces  $V_t\cong
V$ are isomorphic as having equal dimensions, all fibres of the
Grassmannian bundle are also isomorphic: $G(V_t,r) \cong G(V,r)$.
For an admissible semistable pair  $((\widetilde S, \widetilde L),
\widetilde E)$ for  $m\gg 0$ there is a closed immersion
\linebreak
 $j: \widetilde S \hookrightarrow
G(V,r)$ which is defined by the epimorphism of locally free
sheaves \linebreak $H^0(\widetilde S, \widetilde E \otimes
\widetilde L^m) \boxtimes \widetilde L^{-m}\twoheadrightarrow
\widetilde E$. Let $\OO_{G(V,r)}(1)$ be a positive generator of
the group $\Pic G(V,r)$, then $P(n):= \chi (j^{\ast}
\OO_{G(V,r)}(n))$ is the Hilbert polynomial of the closed
subscheme $j(\widetilde S) $. Fix the polynomial  $P(n)$. Consider
the Hilbert scheme $\Hilb^{P(n)}\Grass(\V,r)$ of subschemes in
$\Grass (\V,r)$ having Hilbert polynomial equal to $P(n)$, and its
universal subscheme $\Univ^{P(n)}\Grass(\V,r) \to
\Hilb^{P(n)}\Grass(\V,r)$.

The family of admissible semistable pairs  $((\pi:\widetilde
\Sigma \to T, \widetilde \L), \widetilde \E)$ induces
the following diagram with fibred
square and immersions with "relative"\, schemes
\begin{equation*}\xymatrix{ \widetilde \Sigma \ar[d]_{\pi}
\ar@{^(->}[r]^<<<<<<{\widetilde \mu}&
\Univ^{P(n)}\Grass(\V,r) \ar[d]\\
T \ar@{^(->}[r]^<<<<<<{\mu} \ar[rd]^{\sim}&
\Hilb^{P(n)}\Grass(\V,r) \ar[d]\\
&T}
\end{equation*}
The family of interest contains $S$-pairs; take a maximal open
 $\widetilde \Sigma_0 \subset
\widetilde \Sigma$ which is isomorphic to an open subset in the
product $T \times S$. Consider the relative Grothendieck' scheme
$\Quot^{rp(n)}(\V \boxtimes L^{-m})$ and the diagonal immersion
$$\delta_0: \widetilde \Sigma_0 \hookrightarrow
\Univ^{P(n)}\Grass(\V,r) \times _T \Quot^{rp(n)}(\V \boxtimes
L^{-m}).$$ Let a morphism $\varphi$ be defined as a composite of
 $T$-morphisms
\begin{equation*}\xymatrix{\widetilde \Sigma_0 \ar@{^(->}[r]^<<<<<<{\delta_0}
\ar[rd]_\varphi &\Univ^{P(n)} \Grass(\V,r) \!\times _T\!
\Quot^{rp(n)}(\V \boxtimes L^{-m})\ar[d]^{pr_2}\\
&\Quot^{rp(n)}(\V \boxtimes L^{-m} )}
\end{equation*}
Due to the reasoning on the natural transformation $\mathfrak t$
in the previous section, the immersion $\delta_0$ is continued to
the immersion
$$\delta: \widetilde \Sigma \hookrightarrow
\Univ^{P(n)}\Grass(\V,r) \times _T \Quot^{rp(n)}(\V \boxtimes
L^{-m}).$$ The morphism  $\varphi$ has the scheme $T'$ as its
image. It is isomorphic to $T$. The isomorphism is provided by the
projection on the base $T$ of relative schemes.

The immersion $\mu':\varphi(\widetilde \Sigma_0)\cong
T\hookrightarrow \Quot^{rp(n)}(\V \boxtimes L^{-m})$ gives rise to
a family of coherent semistable torsion-free sheaves
$\E={\mu'}^{\ast}\E_{\Quot}$. The family of polarizations
 $\L$ is given, for example, by the formula  $\L=\OO_T
\boxtimes L$.

Now perform a standard resolution of the family $\E$ as described
in \cite{Tim8}. We are interested in such a version of standard
resolution which does not change the base $T$. The standard
resolution suggested in \cite{Tim8} involves a blowing up  $\sigma
\!\!\! \sigma: \widehat \Sigma \to \Sigma$ of the product
$\Sigma=T\times S$ in the sheaf of Fitting ideals $\I=\FFitt^0
\EExt^1(\E, \OO_{\Sigma}).$ In the same article we proved that the
composite $f:=p_1\circ \sigma \!\!\! \sigma: \widehat \Sigma \to
T$ is flat morphism. We obtain a family of admissible semistable
pairs $((\pi': \widetilde \Sigma' \to T, \widetilde \L'),
\widetilde \E')$, where $\widetilde \Sigma'=\widehat \Sigma,$
$\pi'=f$, $\widetilde \L'= \sigma\!\!\!\sigma^{\ast} \L \otimes
\sigma\!\!\!\sigma^{-1} \I \cdot \OO_{\widetilde \Sigma},$
$\widetilde \E'$ is such as described in \cite{Tim8}. It gives
rise to a locally free sheaf $\pi'_{\ast}(\widetilde \E'\otimes
(\widetilde \L')^m)$. We need the following proposition which will
be proven below.
\begin{proposition} \label{3bundles} After tensoring by appropriate
invertible $\OO_T$-sheaves following locally free sheaves are
isomorphic: $\V=\pi_{\ast}(\widetilde \E \otimes \widetilde
\L^m)$, $\V_0=p_{\ast}(\E \otimes \L^m)$, and
$\V'=\pi'_{\ast}(\widetilde \E' \otimes (\widetilde \L')^m).$
\end{proposition}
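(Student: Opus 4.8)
\textbf{Proof plan for Proposition \ref{3bundles}.}
The plan is to compare the three locally free $\OO_T$-sheaves $\V$, $\V_0$, $\V'$ by using that all three are (up to twist by a line bundle pulled back from $T$) built as pushforwards of sheaves that agree on the large open subscheme $\widetilde\Sigma_0\cong\Sigma_0\subset T\times S$, and that the fibrewise cohomology groups all have the same dimension $rp(m)$. First I would record the fibrewise statements: for each closed point $t\in T$ there are canonical isomorphisms $H^0(\pi^{-1}(t),\widetilde E_t\otimes\widetilde L_t^m)\cong H^0(S,E_t\otimes L_t^m)\cong H^0({\pi'}^{-1}(t),\widetilde E'_t\otimes(\widetilde L'_t)^m)$. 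The first and third of these are exactly the isomorphism $H^0(\widetilde S,\widetilde E\otimes\widetilde L)\cong H^0(S,E\otimes L)$ recalled in the excerpt after the construction of $tors$ (applied to the appropriate power of the polarization), since in each case the sheaf on the admissible scheme is obtained as $\sigma^\ast E/tors$ for the corresponding torsion-free sheaf on $S$; here I use that the transformation of sect.~1 applied to $((\pi:\widetilde\Sigma\to T,\widetilde\L),\widetilde\E)$ recovers precisely the torsion-free sheaf $\E$ whose standard resolution gives $((\pi':\widetilde\Sigma'\to T,\widetilde\L'),\widetilde\E')$. By Grauert's theorem (cohomology and base change), flatness of $\widetilde\E$, $\E$, $\widetilde\E'$ over $T$ together with constancy of $h^0$ on fibres gives that $\V$, $\V_0$, $\V'$ are locally free of rank $rp(m)$ and that their formation commutes with base change to points.

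Next I would produce the comparison isomorphisms over all of $T$, not just on fibres. The key point is the relation $(\Phi_\ast\widetilde{\mathbb E})^{\vee\vee}=(\phi,\id_S)^\ast\E$ of (\ref{descsh}) together with the fact, used repeatedly in sect.~1, that $\widetilde\Sigma_0$ is dense in $\widetilde\Sigma$, its complement is a union of additional components of fibres whose images in $S$ are finite sets of points, and hence the complement of $\Sigma_0$ in $T\times S$ has codimension $2$ in each fibre. Pushing $\widetilde\E\otimes\widetilde\L^m$ forward along $\pi$ and $\E\otimes\L^m$ forward along $p$ therefore yields sheaves whose restrictions to $T_0$ (where the families literally agree with families on $T_0\times S$) coincide, and whose formation commutes with base change; since both are locally free $\OO_T$-modules and $T_{red}$ is irreducible, an isomorphism on the dense open $T_0$ that is compatible with base change to closed points of all of $T$ extends to an isomorphism on $T$ — here one invokes that a map of vector bundles on $T$ which is an isomorphism at every closed point is an isomorphism, after possibly twisting one side by a line bundle from $T$ to match determinants/trivializations (this is the role of "after tensoring by appropriate invertible $\OO_T$-sheaves" in the statement). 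The same argument compares $\V$ with $\V'$ directly, using that both $\widetilde\Sigma$ and $\widetilde\Sigma'$ contain open subschemes isomorphic to the same $\Sigma_0\subset T\times S$ on which $\widetilde\E$ and $\widetilde\E'$ restrict to the same sheaf $\E|_{\Sigma_0}$, and that $H^0$ of the twisted sheaf on a fibre is unchanged by passing between a fibre and its resolution.

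The main obstacle I anticipate is precisely the passage from the fibrewise isomorphisms to a global isomorphism of bundles over a possibly nonreduced base $T$: on the nilpotent part one cannot simply glue fibrewise data, and one must check that the canonical comparison maps $\pi_\ast(\widetilde\E\otimes\widetilde\L^m)\to p_\ast(\E\otimes\L^m)$ (and its analogue for $\V'$) — obtained, say, by adjunction from $\Phi$ and the identification over $\Sigma_0$ — are defined as maps of $\OO_T$-modules over the whole of $T$, and that they are isomorphisms. The way I would handle this is exactly the local-ring argument already used at the end of sect.~1: reduce to $T=\Spec A$ with $A$ local artinian (or local of finite type) with residue field $k$, so that $\V$, $\V_0$, $\V'$ are free $A$-modules of rank $rp(m)$; the comparison map is then a map of free $A$-modules which is an isomorphism modulo $\mathfrak m$ (the fibrewise statement), hence an isomorphism by Nakayama. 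One subtlety to be careful about is that the comparison map must be constructed before reducing mod $\mathfrak m$ — I would get it from the unique-continuation results of sect.~1, which show that the epimorphism $\V\boxtimes L^{-m}\twoheadrightarrow\E$ built there, restricted to $\Sigma_0$ and extended, is canonically the one coming from $\widetilde\E$, so that $\pi_\ast$, $p_\ast$, $\pi'_\ast$ of the respective twisted sheaves are canonically identified on $\Sigma_0$ and this identification is $\OO_T$-linear. Once the map is in hand over $T$, Nakayama plus constancy of ranks finishes it; the "appropriate invertible $\OO_T$-sheaves" absorb the ambiguity in the choice of $m$-th power and in the normalization $c_1=0$ of the pushforwards that was imposed in sect.~2.
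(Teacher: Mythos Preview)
Your endgame matches the paper's: once a global $\OO_T$-linear comparison map between two locally free sheaves of rank $rp(m)$ is in hand, checking that it is an isomorphism on $T_{red}$ (or on closed fibres) and applying a Nakayama-type argument is exactly how the paper finishes---this is Lemma~\ref{endmod}. The comparison $\V\cong\V_0$ also goes essentially as you describe: the paper simply takes the epimorphism $\V\boxtimes L^{-m}\twoheadrightarrow\E$ coming from the Quot immersion, twists and pushes forward to get $\psi:\V\to p_\ast(\E\otimes L^m)$, and is done.

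The gap is in your construction of the comparison map between $\V_0$ and $\V'$. You invoke the relation~(\ref{descsh}), but the paper explicitly warns (in the remark right after the standard resolution is recalled) that this relation has \emph{not} been established for nonreduced bases, so it is not available here. Your fallback---``adjunction from $\Phi$ and the identification over $\Sigma_0$''---does not directly yield a map either, because $\widetilde\L'=\sigma\!\!\!\sigma^\ast\L\otimes(\sigma\!\!\!\sigma^{-1}\I\cdot\OO_{\widetilde\Sigma'})$ is twisted by the pullback of the Fitting ideal; pushing $\widetilde\E'\otimes(\widetilde\L')^m$ down by $\sigma\!\!\!\sigma_\ast$ does not naively give something receiving a map from $\E\otimes\L^m$. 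The paper's actual construction is: start from the epimorphism $\sigma\!\!\!\sigma^\ast\E\twoheadrightarrow\widetilde\E'$ supplied by the standard resolution, twist by $(\widetilde\L')^m$, push down, and use a generalized projection formula (Lemma~\ref{profor}, valid only as a \emph{monomorphism} since $\E$ is not locally free) to obtain
\[
(\E\otimes\L^m)\otimes\sigma\!\!\!\sigma_\ast(\sigma\!\!\!\sigma^{-1}\I\cdot\OO_{\widetilde\Sigma'})^m\;\hookrightarrow\;\sigma\!\!\!\sigma_\ast\bigl(\sigma\!\!\!\sigma^\ast\E\otimes(\widetilde\L')^m\bigr)\;\to\;\sigma\!\!\!\sigma_\ast\bigl(\widetilde\E'\otimes(\widetilde\L')^m\bigr).
\]
After $p_\ast$ and suitable twists from $T$, the left-hand side maps to its reflexive hull $p_\ast(\E\otimes\L^m)=\V_0$, and the composite (the ``skew arrow'' $\eta$ in diagram~(\ref{di})) factors through that reflexive hull because its target $\V'$ is locally free. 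This produces $\eta^{\vee\vee}:\V_0\to\V'$, to which the reduction/Nakayama argument is then applied. So the missing ingredient in your plan is precisely this projection-formula-plus-reflexive-hull trick for manufacturing a global map when the polarization on the blown-up side carries the ideal-sheaf twist.
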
 By the proposition \ref{3bundles} there are
following immersions of $T$-schemes
$$\delta':\widetilde
\Sigma'\hookrightarrow \Univ^{P(n)} \Grass (\V, r) \times _T
\Quot^{rp(n)}(\V \boxtimes L^{-m})$$ and $$\widetilde
\mu':\widetilde \Sigma'\hookrightarrow \Univ^{P(n)} \Grass (\V,
r).
$$ The scheme  $\widetilde \Sigma'$ contains an open subscheme
$\widetilde \Sigma'_0$ which is isomorphic to the subscheme
$\widetilde \Sigma_0$. By this isomorphism and by the construction
of standard resolution there are isomorphisms $\widetilde
\L'|_{\widetilde \Sigma'_0} \cong \widetilde \L|_{\widetilde
\Sigma_0}$ and $\widetilde \E'|_{\widetilde \Sigma'_0} \cong
\widetilde \E|_{\widetilde \Sigma_0}$. Besides, fibres of families
 $\widetilde \Sigma$ and $\widetilde
\Sigma'$ in closed points coincide.

Then in  $\Univ^{P(n)} \Grass (\V, r)$ there are two
$T$-subschemes $\widetilde \mu(\widetilde \Sigma)$ and $\widetilde
\mu'(\widetilde \Sigma)$ which coincide when restricted on the
reduction  $T_{red}$ and on open subsets  $\widetilde
\mu(\widetilde \Sigma_0) = \widetilde \mu'(\widetilde \Sigma'_0)$.

\begin{remark} The coincidence along reductions follows from
the uniqueness of the scheme closure for  $\widetilde
\mu(\widetilde \Sigma_{0red})=\widetilde \mu'(\widetilde
\Sigma'_{0red})$ in $\Univ^{P(n)}\Grass (\V, r) \times _T
T_{red}$. This implies the iso\-morphism of reduced moduli
functors ${\mathfrak f}^{GM}_{red}\cong {\mathfrak f}_{red}$ and
hence the isomorphism of reduced moduli schemes  $\overline
M_{red} \cong \widetilde M_{red}$.
\end{remark}

Now note that under the projection
$$\pi: \Univ^{P(n)}\Grass(\V,r) \to \Hilb^{P(n)}\Grass(\V,r)$$
we have $$ \pi(\widetilde \mu(\widetilde
\Sigma_0))=\mu(T)=\mu'(T)=\pi(\widetilde \mu'(\widetilde
\Sigma'_0)).
$$
This subscheme is mapped  by the structure projection
$\Hilb^{P(n)}\Grass (\V,r) \to T$ to the base $T$ isomorphically.

By the construction and by the universal property of the Hilbert
scheme we have
$$\widetilde \mu(\widetilde \Sigma)=\pi^{-1}\pi(\widetilde \mu(\widetilde \Sigma_0))
=\pi^{-1}\mu(T)=\pi^{-1}\mu'(T)=\pi^{-1}\pi(\widetilde
\mu'(\widetilde \Sigma'_0))=\widetilde \mu'(\widetilde \Sigma').
$$
Isomorphisms $\widetilde \Sigma \cong \widetilde \mu(\widetilde
\Sigma)$ and $\widetilde \Sigma' \cong \widetilde \mu'(\widetilde
\Sigma')$ complete the proof.

\begin{proof}[Proof of proposition \ref{3bundles}]
Consider an epimorphism of  $\OO_{T\times S}$-modules
$$
\V \boxtimes L^{-m}\twoheadrightarrow\E,
$$
associated with the immersion $T \hookrightarrow \Quot^{rp(n)}(\V
\boxtimes L^{-m}).$ Tensoring by $\OO_S$-sheaf $L^m$ and formation
of a direct image  $p_{\ast}$ lead to the morphism of locally free
 $\OO_T$-modules $\psi: \V
\to p_{\ast} (\E \boxtimes L^m)$. The sheaf on the right hand side
differs from  $\V_0$ by tensoring by some invertible
$\OO_T$-module what proves the proposition for  $\OO_T$-modules
$\V$ and $\V_0$.

Now turn to the pair  $\V_0$ and $\V'$. Recall that sheaves
$\widetilde \E'$ and $\widetilde \L'$ are obtained by standard
resolution of the family  $\E$. In this procedure one gets an
epimorphism $\sigma\!\!\!\sigma^{\ast}\E \twoheadrightarrow
\widetilde \E'$ \cite{Tim8}. Twisting by  $(\widetilde \L')^m$ and
formation of the direct image $\sigma\!\!\!\sigma_{\ast}$ lead to
the morphism of $\OO_{T\times S}$-modules
\begin{equation}\label{quasisig}\sigma\!\!\!\sigma_{\ast}(\sigma\!\!\!\sigma^{\ast}\E \otimes
(\widetilde \L')^m) \to \sigma\!\!\!\sigma_{\ast}(\widetilde \E'
\otimes (\widetilde \L')^m).\end{equation} Now we need the
following lemma which generalizes well-known projection formula.
\begin{lemma}\label{profor} Let $f: (X,\OO_X) \to (Y, \OO_Y)$ be a morphism of
locally ringed spaces such that $f_{\ast}\OO_X=\OO_Y$, $\EE$
$\OO_Y$-module of finite presentation, $\FF$  $\OO_X$-module. Then
there is a monomorphism $\EE \otimes f_{\ast}\FF \hookrightarrow
f_{\ast}[f^{\ast}\EE \otimes \FF].$
\end{lemma}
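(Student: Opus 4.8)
The plan is to construct the monomorphism locally and then glue. Since the statement is local on $Y$, I would pick an affine open $V = \Spec B \subset Y$ and work with a finite presentation $B^{\oplus a} \xrightarrow{\alpha} B^{\oplus b} \to \EE|_V \to 0$. The right-exactness of the tensor product gives an exact sequence $(f_{\ast}\FF)^{\oplus a} \to (f_{\ast}\FF)^{\oplus b} \to \EE \otimes_{\OO_Y} f_{\ast}\FF \to 0$ over $V$; similarly, tensoring the pulled-back presentation $f^{\ast}B^{\oplus a} \to f^{\ast}B^{\oplus b} \to f^{\ast}\EE \to 0$ with $\FF$ and then applying $f_{\ast}$ (which is left exact) yields $0 \to f_{\ast}[f^{\ast}\EE \otimes \FF] \to f_{\ast}[\FF^{\oplus b}] \to f_{\ast}[\FF^{\oplus a}]$. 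The key input is that $f_{\ast}$ commutes with finite direct sums and that, because $f_{\ast}\OO_X = \OO_Y$, the map on the free pieces $f_{\ast}(\FF^{\oplus b}) = (f_{\ast}\FF)^{\oplus b}$ is exactly the map induced by $\alpha$ via the projection formula for free modules. Comparing the two sequences gives a commutative diagram whose middle vertical arrows are the canonical isomorphisms $(f_{\ast}\FF)^{\oplus b} \cong f_{\ast}(f^{\ast}\OO_V^{\oplus b}\otimes\FF)$, and a diagram chase (the cokernel of a map maps to the kernel of the "dual" cooker arrangement) produces the desired canonical map $\EE\otimes f_{\ast}\FF \to f_{\ast}[f^{\ast}\EE\otimes\FF]$.

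Next I would check injectivity of this map. The cleanest route is the diagram
\begin{equation*}
\xymatrix{
(f_{\ast}\FF)^{\oplus a} \ar[r] \ar[d]^{\cong} & (f_{\ast}\FF)^{\oplus b} \ar[r] \ar[d]^{\cong} & \EE\otimes f_{\ast}\FF \ar[r] \ar[d] & 0\\
f_{\ast}(f^{\ast}\OO^{\oplus a}\otimes\FF) \ar[r] & f_{\ast}(f^{\ast}\OO^{\oplus b}\otimes\FF) \ar[r] & f_{\ast}[f^{\ast}\EE\otimes\FF] &
}
\end{equation*}
where the top row is exact, the bottom row is exact on the left and in the middle (left-exactness of $f_{\ast}$ applied to $0\to f^{\ast}\EE\otimes\FF$ being the kernel of $f^{\ast}\OO^{\oplus b}\otimes\FF \to f^{\ast}\OO^{\oplus a}\otimes \FF$ — here one uses that $f^{\ast}$ is right exact and tensoring is right exact, so the pulled-back presentation stays a presentation of $f^{\ast}\EE$ and remains right exact after $\otimes \FF$), and the two leftmost vertical maps are isomorphisms. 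A standard chase then shows the induced map on cokernels is injective: if a local section of $\EE\otimes f_{\ast}\FF$ lifts to $s \in (f_{\ast}\FF)^{\oplus b}$ and dies in $f_{\ast}[f^{\ast}\EE\otimes\FF]$, then the image of $s$ in $f_{\ast}(f^{\ast}\OO^{\oplus b}\otimes \FF)$ comes from $f_{\ast}(f^{\ast}\OO^{\oplus a}\otimes\FF)$, hence (using the left isomorphism) $s$ itself comes from $(f_{\ast}\FF)^{\oplus a}$, i.e. the original section was zero.

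Finally, I would remark that the construction is independent of the chosen presentation (two presentations are dominated by a common one, or one invokes the functoriality of the natural map $\EE\otimes f_{\ast}\FF\to f_{\ast}(f^{\ast}\EE\otimes\FF)$, which exists for any $\OO_Y$-module $\EE$ via adjunction: it is adjoint to $f^{\ast}(\EE\otimes f_{\ast}\FF)=f^{\ast}\EE\otimes f^{\ast}f_{\ast}\FF\to f^{\ast}\EE\otimes\FF$ using the counit $f^{\ast}f_{\ast}\FF\to\FF$). So the local maps glue to a global morphism $\EE\otimes f_{\ast}\FF\hookrightarrow f_{\ast}[f^{\ast}\EE\otimes\FF]$, and injectivity having been verified locally, it is injective globally. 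The main obstacle I anticipate is bookkeeping the exactness directions correctly — one only gets \emph{left}-exactness from $f_{\ast}$ and only \emph{right}-exactness from $f^{\ast}$ and $\otimes$, so the monomorphism (rather than isomorphism) is genuinely the best one can hope for without flatness hypotheses on $\FF$; the hypothesis $f_{\ast}\OO_X=\OO_Y$ is exactly what makes the map on the free generators match up, and it must be used precisely there and nowhere else.
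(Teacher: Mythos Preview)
Your overall strategy coincides with the paper's: take a locally free presentation $E_1\to E_0\to\EE\to 0$, use the ordinary projection formula on the free terms to identify $f_{\ast}(f^{\ast}E_i\otimes\FF)\cong E_i\otimes f_{\ast}\FF$, and then pass to the cokernel. The paper's argument is essentially the one-line assertion
\[
\EE\otimes f_{\ast}\FF=\coker\bigl(E_1\otimes f_{\ast}\FF\to E_0\otimes f_{\ast}\FF\bigr)\hookrightarrow f_{\ast}[f^{\ast}\EE\otimes\FF],
\]
so on the level of ``which presentation trick to use'' you are doing the same thing.

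Where your write-up goes wrong is the justification of injectivity. You claim that applying $f_{\ast}$ to the pulled-back, tensored presentation yields ``$0\to f_{\ast}[f^{\ast}\EE\otimes\FF]\to f_{\ast}[\FF^{\oplus b}]\to f_{\ast}[\FF^{\oplus a}]$'' and later that the bottom row of your diagram is ``exact in the middle'' because ``$f^{\ast}\EE\otimes\FF$ is the kernel of $f^{\ast}\OO^{\oplus b}\otimes\FF\to f^{\ast}\OO^{\oplus a}\otimes\FF$''. Both statements are false: after pullback and tensoring you have the \emph{right}-exact sequence $\FF^{\oplus a}\to\FF^{\oplus b}\to f^{\ast}\EE\otimes\FF\to 0$, so $f^{\ast}\EE\otimes\FF$ is a cokernel, not a kernel, and pushing forward by the left-exact $f_{\ast}$ produces only a complex $(f_{\ast}\FF)^{\oplus a}\to(f_{\ast}\FF)^{\oplus b}\to f_{\ast}(f^{\ast}\EE\otimes\FF)$ with no exactness at the middle term. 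Concretely, if $K'=\im(\FF^{\oplus a}\to\FF^{\oplus b})$ then left-exactness gives $\ker\bigl((f_{\ast}\FF)^{\oplus b}\to f_{\ast}(f^{\ast}\EE\otimes\FF)\bigr)=f_{\ast}K'$, and your diagram chase needs the surjectivity of $(f_{\ast}\FF)^{\oplus a}\to f_{\ast}K'$, which does not follow from left-exactness of $f_{\ast}$. Thus the step ``$s$ dies in $f_{\ast}[f^{\ast}\EE\otimes\FF]$, hence comes from $(f_{\ast}\FF)^{\oplus a}$'' is unsupported. The paper's proof is equally laconic at exactly this point; it asserts the inclusion of the cokernel without spelling out why the image of $(f_{\ast}\FF)^{\oplus a}$ fills up the whole of $f_{\ast}K'$, so your version does not omit anything the paper supplies --- but the explicit reasoning you inserted to bridge the gap is incorrect and should be removed or replaced.
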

\begin{proof}[Proof of lemma \ref{profor}]
Fix any finite presentation for $\EE$: $ E_1 \to E_0 \to \EE \to
0, $ where $E_0, E_1$ are locally free $\OO_Y$-modules. Formation
of an inverse image  $f^{\ast}$, tensoring by $\otimes_X \FF$
followed by formation of a direct image $f_{\ast}$ lead to a
complex
$$
\dots \to f_{\ast}[f^{\ast}E_1 \otimes_X \FF] \to
f_{\ast}[f^{\ast}E_1 \otimes_X \FF] \to f_{\ast}[f^{\ast}\EE
\otimes_X \FF] \to \dots
$$
Due to the usual projection formula, first two terms equal $E_1
\otimes_Y f_{\ast}\FF$ and $E_0 \otimes_Y f_{\ast}\FF$
respect\-ively. Then we have  $$\EE \otimes_Y f_{\ast}\FF =\coker
(E_1 \otimes_Y f_{\ast}\FF \to E_0 \otimes_Y f_{\ast}\FF)
\hookrightarrow f_{\ast}[f^{\ast}\EE \otimes_X \FF].$$ This proves
the lemma.
\end{proof}

Applying the lemma we get the monomorphism
\begin{equation}\label{projfor}(\E\otimes \L^m) \otimes
\sigma\!\!\!\sigma_{\ast}(\sigma\!\!\!\sigma^{-1}\I \cdot
\OO_{\widetilde \Sigma})^m \hookrightarrow
\sigma\!\!\!\sigma_{\ast}(\sigma\!\!\!\sigma^{\ast}\E \otimes
(\widetilde \L')^m).\end{equation} Formation of inverse image
$p_{\ast}$ in both morphisms  (\ref{quasisig}) and (\ref{projfor})
and the equality  $\pi=p\circ \sigma\!\!\!\sigma$ lead to the
diagram
\begin{equation}\label{di}
\xymatrix{p_{\ast}[(\E\otimes \L^m) \otimes
\sigma\!\!\!\sigma_{\ast}(\sigma\!\!\!\sigma^{-1}\I \cdot
\OO_{\widetilde \Sigma})^m] \ar@{^(->}[d] \ar[rd]^{\eta} \ar[r]&
p_{\ast}[\E\otimes \L^{m}]\\
\pi_{\ast}[\sigma\!\!\!\sigma^{\ast}\E \otimes (\widetilde \L')^m]
\ar[r]& \pi_{\ast}[\widetilde \E' \otimes (\widetilde \L')^m]}
\end{equation}
Upper horizontal arrow is induced by the inclusion
$\sigma\!\!\!\sigma^{-1} \I \cdot \OO_{\widetilde
\Sigma}\hookrightarrow \OO_{\widetilde \Sigma}.$ Lower horizontal
arrow is an epimorphism since  $m\gg 0$ and $\widetilde \L'$ is
ample relatively to the projection $\pi.$ Skew arrow is defined as
a composite of morphisms and will be of use below.

In the diagram (\ref{di}) sheaves from right hand side are locally
free of rank $rp(m)$. Tensoring $\E$ (or $\widetilde \E'$) by an
appropriate invertible sheaf  $\LL$ from the base $T$ we make
sheaves from the right hand side in (\ref{di}) coincide on the
open subset out of a subscheme of codimension $\ge 2$. Besides, it
is known \cite{Hart} that for a sheaf of ideals  $\I$ on a scheme
$\Sigma $ and for any invertible  $\OO_{\Sigma}$-sheaf $\LL'$
blowing ups of this scheme defined by sheaves $\I$ and $\I \otimes
\LL'$, are isomorphic. Then tensoring if necessary the sheaf of
ideals $\I$ by an appropriate invertible $\OO_T$-sheaf $\LL'$, we
achieve that sheaves in the upper row of the diagram (\ref{di})
coincide on the open subset out of subscheme of codimension $\ge
2$. This transformation leads to tensoring the sheaf $\widetilde
\L'$ by $\pi^{\ast} \LL'.$

After such a transformation the upper horizontal arrow in
(\ref{di}) is a canonical morphism of the sheaf
$p_{\ast}[(\E\otimes \L^m) \otimes
\sigma\!\!\!\sigma_{\ast}(\sigma\!\!\!\sigma^{-1}\I \cdot
\OO_{\widetilde \Sigma})^m]$ to its reflexive hull. The skew arrow
is the morphism of the same sheaf to a locally free sheaf which is
obviously reflexive. Hence this morphism factors through the
reflexive hull and gives rise to a (double dual to $\eta$)
morphism  $\eta^{\vee \vee}:p_{\ast}[\E\otimes \L^{m}] \to
\pi_{\ast}[\widetilde \E' \otimes (\widetilde \L')^m]$ of locally
free  $\OO_T$-sheaves. It is an isomorphism on open subset out of
a subscheme of codimension not less then 2. Besides, the
restrictions of both locally free sheaves to the reduction
$T_{red}$ also coincide. The coincidence of sheaves on the whole
of the scheme $T$ follows from the following simple algebraic
lemma.
\begin{lemma}\label{endmod} Let $A$ be a commutative ring,  $M$ finite $A$-module,
$\Phi:M \to M$ its $A$-endomorphism. Let the reduction
$\Phi_{red}: M_{red} \to M_{red}$ is $A_{red}$-automorphism. Then
$\Phi$ is $A$-automorphism.
\end{lemma}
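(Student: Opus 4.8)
The plan is to derive the statement from two classical facts: the determinant form of Nakayama's lemma, and the theorem that a surjective endomorphism of a finitely generated module over a commutative ring is automatically injective. Write $N\subset A$ for the nilradical, so that $A_{red}=A/N$, $M_{red}=M/NM=M\otimes_A A_{red}$, and $\Phi_{red}$ is the endomorphism induced by $\Phi$ after applying $-\otimes_A A_{red}$. Set $C=\coker\Phi=M/\Phi(M)$, a finitely generated $A$-module. Since $-\otimes_A A_{red}$ is right exact, $C\otimes_A A_{red}=\coker\Phi_{red}=0$ because $\Phi_{red}$ is surjective; hence $C=NC$.

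Next I would apply the determinant trick to the finitely generated module $C$ and the ideal $N$: from $NC=C$ one obtains an element $a\in N$ with $(1-a)C=0$. As $a$ is nilpotent, $1-a$ is a unit of $A$, so $C=0$, i.e. $\Phi$ is surjective. (When $A$ is noetherian, as in all our applications, one may instead argue directly: $N$ is nilpotent, so $C=NC=N^2C=\cdots=N^kC=0$.) This is the only place where any care is needed; everything else is formal, and the one subtlety — making the argument work without a noetherian hypothesis — is dispatched precisely by using the determinant form of Nakayama rather than nilpotence of $N$.

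Finally I would promote surjectivity to bijectivity. Regard $M$ as a module over the polynomial ring $A[x]$, with $x$ acting as $\Phi$. Then $xM=\Phi(M)=M$, so $(x)M=M$, and the determinant trick applied to the finitely generated $A[x]$-module $M$ and the ideal $(x)$ yields $p(x)\in A[x]$ with $\bigl(1-x\,p(x)\bigr)M=0$. If $m\in\ker\Phi$, then, using commutativity of $A[x]$, $m=x\,p(x)\,m=p(x)\cdot\Phi(m)=0$; hence $\Phi$ is injective. Combined with the surjectivity established above, $\Phi$ is an $A$-automorphism, which completes the proof.
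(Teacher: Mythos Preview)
Your proof is correct and follows essentially the same two-step strategy as the paper: first deduce surjectivity of $\Phi$ from surjectivity of $\Phi_{red}$ via Nakayama (the paper cites \cite[ch.~2, exercise~10]{AM}, using that the nilradical lies in the Jacobson radical, whereas you invoke the determinant trick directly and observe that $1-a$ is a unit for nilpotent $a$), and then promote surjectivity to bijectivity via the standard $A[x]$-module argument (the paper cites \cite[theorem~2.4]{Mats}, which you reprove). The only difference is that you unpack the two cited results rather than quoting them.
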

\end{proof}
\begin{proof}[Proof of lemma \ref{endmod}] Since the homomorphism $\Phi_{red}$
is surjective and the nilradical includes in the Jackobson radical
then, due to  \cite[ch. 2, exercise 10]{AM}, $\Phi$ is also
surjective. Then by  \cite[theorem 2.4]{Mats} $\Phi$ is an
automorphism.
\end{proof}

\end{document}